\providecommand{\U}[1]{\protect\rule{.1in}{.1in}}
\providecommand{\U}[1]{\protect\rule{.1in}{.1in}}
\providecommand{\U}[1]{\protect\rule{.1in}{.1in}}
\providecommand{\U}[1]{\protect\rule{.1in}{.1in}}
\newtheorem{theorem}{Theorem}[section]
\newtheorem{corollary}[theorem]{Corollary}
\newtheorem{proposition}[theorem]{Proposition}
\newtheorem{lemma}[theorem]{Lemma}
\theoremstyle{definition}
\newtheorem{problem}[theorem]{Problem}
\newtheorem{definition}[theorem]{Definition}
\begin{document}
\title[Absolutely summing multipolynomials]{Absolutely summing multipolynomials}
\author[T. Velanga]{T. Velanga}
\address{IMECC--UNICAMP, Universidade Estadual de Campinas \\
13.083-859 - S\~{a}o Paulo, Brazil; Departamento de Matem\'{a}tica,
Universidade Federal de Rond\^{o}nia, 76.801-059 - Porto Velho, Brazil.}
\email{thiagovelanga@unir.br, ra115476@ime.unicamp.br}
\thanks{2010 Mathematics Subject Classification: Primary 46B15, 46G25, 47H60}
\thanks{T. Velanga was supported by CAPES Grant 23038.002511/2014-48 and FAPERO Grant 01133100023-0000.41/2014}
\keywords{Absolutely summing operators; Multilinear mappings, Homogeneous polynomials;
Multipolynomials; Banach spaces; Cotype.}

\begin{abstract}
In this paper, we develop the theory of absolutely summing multipolynomials.
Among other results, we generalize and unify previous works of G. Botelho and
D. Pellegrino concerning absolutely summing polynomials/multilinear mappings
in Banach spaces with unconditional Schauder basis.

\end{abstract}
\maketitle


\section{Introduction}

The basics of the linear theory of absolutely summing operators can be found
in the classical book \cite{diestel}. Its extension to the multilinear setting
was sketched by A. Pietsch in 1983 \cite{pi2} and it was rapidly developed
thereafter in several nonlinear environments.

For the basic theory of homogeneous polynomials and multilinear mappings
between Banach spaces we refer to S. Dineen \cite{dineen} and J. Mujica
\cite{Muj}.

Throughout this paper $X_{1},\ldots,X_{m},X,Y$ will stand for Banach spaces
and the scalar field $\mathbb{K}$ can be either $\mathbb{R}$ or $\mathbb{C}$.
By $X^{\prime}$ we denote the topological dual of $X$ and by $B_{X}$ its
closed unit ball. For each $m\in\mathbb{N}$, $\mathcal{P}\left(
^{m}X;Y\right)  $ ($\mathcal{P}\left(  ^{m}X\right)  $ if $Y=\mathbb{K}$)
denotes the space of continuous $m$-homogeneous polynomials from $X$ into $Y$
endowed with the usual $\sup$ norm; and $\mathcal{L}\left(  X_{1},\ldots
,X_{m};Y\right)  $ ($\mathcal{L}\left(  X_{1},\ldots,X_{m}\right)  $ if
$Y=\mathbb{K}$) denotes the space of continuous $m$-linear mappings from the
cartesian product $X_{1}\times\ldots\times X_{m}$ into $Y$ with the usual
$\sup$ norm.

Let $p\in\lbrack1,\infty)$. The vector space of all sequences $\left(
x_{j}\right)  _{j=1}^{\infty}$ in $X$ such that $\left\Vert \left(
x_{j}\right)  _{j=1}^{\infty}\right\Vert _{p}=\left(
{\textstyle\sum_{j=1}^{\infty}}
\left\Vert x_{j}\right\Vert ^{p}\right)  ^{1/p}<\infty$ will be denoted by
$\ell_{p}\left(  X\right)  $. We will also denote by $\ell_{p}^{w}\left(
X\right)  $ the vector space composed by the sequences $\left(  x_{j}\right)
_{j=1}^{\infty}$ in $X$ such that $\left(  \varphi\left(  x_{j}\right)
\right)  _{j=1}^{\infty}$ in $\ell_{p}\left(  \mathbb{K}\right)  $ for every
continuous linear functional $\varphi:X\rightarrow\mathbb{K}$. The function
$\left\Vert \cdot\right\Vert _{w,p}$ in $\ell_{p}^{w}\left(  X\right)  $
defined by $\left\Vert \left(  x_{j}\right)  _{j=1}^{\infty}\right\Vert
_{w,p}=\sup_{\varphi\in B_{X^{\prime}}}\left(
{\textstyle\sum_{j=1}^{\infty}}
\left\Vert \varphi\left(  x_{j}\right)  \right\Vert ^{p}\right)  ^{1/p}$ is a
norm. The case $p=\infty$ is the case of the bounded sequences and in
$\ell_{\infty}\left(  X\right)  $ we use the $\sup$ norm.

Let us begin by recalling the notions of absolutely summing homogeneous
polynomials and multilinear mappings. These notions dates back to the works of
A. Pietsch \cite{pi2}\ and Alencar--Matos \cite{alenmat}.

\begin{definition}
\label{defabsmultilin}A continuous $m$-homogeneous polynomial $P:X\rightarrow
Y$ is \textit{absolutely} $\left(  p;q\right)  $-\textit{summing} (or $\left(
p;q\right)  $-\textit{summing}) if $\left(  P\left(  x_{j}\right)  \right)
_{j=1}^{\infty}\in\ell_{p}\left(  Y\right)  $ for all $\left(  x_{j}\right)
_{j=1}^{\infty}\in\ell_{q}^{w}\left(  X\right)  $. A continuous $m$-linear
mapping $T:X_{1}\times\cdots\times X_{m}\rightarrow Y$ is \textit{absolutely
}$\left(  p;q_{1},\ldots,q_{m}\right)  $-\textit{summing }(or $\left(
p;q_{1},\ldots,q_{m}\right)  $-\textit{summing}) if $\left(  T\left(
x_{j}^{\left(  1\right)  },\ldots,x_{j}^{\left(  m\right)  }\right)  \right)
_{j=1}^{\infty}\in\ell_{p}\left(  Y\right)  $ for all $\left(  x_{j}^{\left(
k\right)  }\right)  _{j=1}^{\infty}\in\ell_{q_{k}}^{w}\left(  X_{k}\right)  ,$
$k=1,\ldots,m$.
\end{definition}

The space of absolutely $\left(  p;q\right)  $-summing $m$-homogeneous
polynomials from $X$ into $Y$ is denoted by $\mathcal{P}_{\text{\textit{as}%
}\left(  p;q\right)  }\left(  ^{m}X;Y\right)  $ ($\mathcal{P}%
_{\text{\textit{as}}\left(  p;q\right)  }\left(  ^{m}X\right)  $ if
$Y=\mathbb{K}$). Analogously, the space of absolutely $\left(  p;q_{1}%
,\ldots,q_{m}\right)  $-summing $m$-linear mappings from $X_{1}\times
\cdots\times X_{m}$ into $Y$ is denoted by $\mathcal{L}_{\text{\textit{as}%
}\left(  p;q_{1},\ldots,q_{m}\right)  }\left(  X_{1},\ldots,X_{m};Y\right)  $
($\mathcal{L}_{\text{\textit{as}}\left(  p;q_{1},\ldots,q_{m}\right)  }\left(
X_{1},\ldots,X_{m}\right)  $ if $Y=\mathbb{K}$). When $q_{1}=\cdots=q_{m}=q$,
we simply write $\mathcal{L}_{\text{\textit{as}}\left(  p;q\right)  }\left(
X_{1},\ldots,X_{m};Y\right)  $.

A mapping $P:X_{1}\times\cdots\times X_{m}\rightarrow Y$ is said to be an
$\left(  n_{1},\ldots,n_{m}\right)  $\textit{-homogeneous polynomial} if $P$
is an $n_{k}$-homogeneous polynomial in each variable. This space is denoted
by $\mathcal{P}\left(  ^{n_{1}}X_{1},\ldots,^{n_{m}}X_{m};Y\right)  $ and%
\[
\left\Vert P\right\Vert :=\sup\left\{  \left\Vert P\left(  x_{1},\ldots
,x_{m}\right)  \right\Vert ;x_{k}\in X_{k}\text{, }\underset{k}{\max
}\left\Vert x_{k}\right\Vert _{X_{k}}\leq1\right\}  \text{,}%
\]
is a norm in $\mathcal{P}\left(  ^{n_{1}}X_{1},\ldots,^{n_{m}}X_{m};Y\right)
$. When $m=1$ and $n_{1}=1$, it is just the concept of linear operators; when
$m=1$ and $n_{1}>1$, we have the concept of homogeneous polynomials and
finally, when $m>1$ and $n_{1}=\cdots=n_{m}=1$, we recover the concept of
multilinear mappings. Sometimes these particular cases will be called
\textit{extreme cases,} and the intermediary cases: $m>1$ and $n_{j}>1$ for
some $j=1,\ldots,m$, will be called \textit{strict multipolynomial}.

The notion of absolutely summing multipolynomials was introduced in
\cite[Definition 4.5]{Vel}. As it happens with absolutely summing polynomials
and multilinear mappings, absolutely summing multipolynomials can be
characterized by means of sequences (the same happens to multiple summing
multipolynomials). The proofs are similar to the classical proofs; in essence,
the only difference is that we shall use the Banach--Steinhaus Theorem for
multipolynomials. For further reference, we shall denote the space of
absolutely $\left(  q;1,\ldots,1\right)  $-summing $\left(  n_{1},\ldots
,n_{m}\right)  $\textit{-}homogeneous polynomials from $X_{1}\times
\cdots\times X_{m}$ into $Y$ by the short notation $\mathcal{P}%
_{\text{\textit{as}}\left(  q;1\right)  }\left(  ^{n_{1}}X_{1},\ldots,^{n_{m}%
}X_{m};Y\right)  $.

The primary goal of this paper is to start developing the absolutely summing
multipolynomial theory intending to unify the known results concerning
homogeneous polynomials and multilinear mappings which have been done
separately so far. Specifically, we generalize to multipolynomials previous
results of Botelho--Pellegrino \cite{bp} and Pellegrino \cite{pelcot, ppoloni}
concerning absolutely summing polynomials and multilinear mappings which have
been broadening established techniques from the famous paper "Absolutely
summing operators in $\mathcal{L}_{p}$ spaces and their applications" by J.
Lindenstrauss and A. Pe\l czy\'{n}ski \cite{lp}.

Modus vivendi: Every concept separately dealt in the multilinear or polynomial
setting should be designed such that it extends and unifies the whole theory
by the procedure to be tracked in this paper.

\section{Preliminary results}

This section is devoted to displaying principal tools which will be useful
further on. A well-known, albeit unpublished, result due to A. Defant and J.
Voigt states that every scalar-valued $m$-linear mapping is absolutely
$(1;1)$-summing (see \cite[Theorem 3.10]{alenmat}). The polynomial version is
also valid. We start by extending that to multipolynomials.

\begin{lemma}
\label{dvmultipol}Every $\left(  n_{1},...,n_{m}\right)  $-homogeneous
polynomial $P:X_{1}\times\cdots\times X_{m}\rightarrow\mathbb{K}$ is
absolutely $\left(  1;1\right)  $-summing.
\end{lemma}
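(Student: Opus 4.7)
The plan is to reduce the multipolynomial statement to the scalar multilinear Defant--Voigt theorem cited above as \cite[Theorem 3.10]{alenmat}, via a blockwise polarization. First, I would polarize $P$ in each of its $m$ variables separately. Since $P$ is $n_k$-homogeneous in $x_k$, the standard polarization formula applied with the remaining variables frozen produces, after iterating over $k = 1,\ldots,m$, a continuous $N$-linear form
$$
A : X_1^{n_1} \times \cdots \times X_m^{n_m} \to \mathbb{K}, \qquad N = n_1 + \cdots + n_m,
$$
symmetric separately in each block of $n_k$ arguments belonging to $X_k$, and satisfying
$$
P(x_1, \ldots, x_m) = A\bigl(\underbrace{x_1, \ldots, x_1}_{n_1},\, \ldots,\, \underbrace{x_m, \ldots, x_m}_{n_m}\bigr)
$$
for every $(x_1,\ldots,x_m) \in X_1 \times \cdots \times X_m$. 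The continuity of $A$ is a consequence of the classical polarization inequality applied one variable at a time.

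Next, given sequences $(x_j^{(k)})_{j=1}^{\infty} \in \ell_1^w(X_k)$ for $k = 1,\ldots,m$, I would insert $(x_j^{(k)})_j$ into each of the $n_k$ slots associated with $X_k$ and apply the multilinear Defant--Voigt theorem to the scalar-valued $N$-linear form $A$. This uses only the trivial remark that placing the same sequence in several slots preserves membership in $\ell_1^w$ at each slot individually. The conclusion is
$$
\sum_{j=1}^{\infty} \bigl| P(x_j^{(1)}, \ldots, x_j^{(m)}) \bigr| = \sum_{j=1}^{\infty} \bigl| A(x_j^{(1)}, \ldots, x_j^{(1)}, \ldots, x_j^{(m)}, \ldots, x_j^{(m)}) \bigr| < \infty,
$$
which is precisely absolute $(1;1)$-summability of $P$.

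There is no substantive obstacle here; the argument is essentially a bookkeeping reduction. The only two points requiring a small amount of care are (i) verifying that iterated polarization genuinely yields a well-defined continuous multilinear form on the product $X_1^{n_1} \times \cdots \times X_m^{n_m}$ (routine, via the polarization inequality in each variable separately), and (ii) checking that the multilinear Defant--Voigt theorem legitimately applies with a sequence repeated across the $n_k$ slots of a single block (immediate, since weak-$\ell_1$ membership is a condition on one slot at a time). Alternatively, one could argue by induction on $m$, using the one-variable polynomial Defant--Voigt theorem in the inductive step, but the polarization route is more transparent and, as a by-product, yields a quantitative norm estimate via the polarization constants.
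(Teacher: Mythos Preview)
Your argument is correct, and it takes a genuinely different route from the paper's proof. The paper does not polarize; instead it first treats the case $X_1=\cdots=X_m=c_0$ by interleaving the $m$ copies of $c_0$ into a single copy and defining an $(n_1+\cdots+n_m)$-homogeneous polynomial $Q:c_0\to\mathbb{K}$ via
\[
Q(x)=P\bigl((x_{(i-1)m+1})_{i\in\mathbb{N}},\ldots,(x_{(i-1)m+m})_{i\in\mathbb{N}}\bigr),
\]
invokes the \emph{polynomial} Defant--Voigt theorem for $Q$, evaluates on a specific block sequence to obtain $\sum_j |P(e_j,\ldots,e_j)|\leq C\|P\|$, and finally transfers to arbitrary $X_k$ through the isometric isomorphism $\mathcal{L}(c_0;X_k)\cong\ell_1^w(X_k)$. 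Your blockwise polarization is shorter and more conceptual: it reduces immediately to the \emph{multilinear} Defant--Voigt theorem with no $c_0$ detour, and as you observe it yields an explicit summing constant in terms of the polarization constants $\prod_{k=1}^{m} n_k^{n_k}/n_k!$. The paper's approach, by contrast, keeps the $c_0$-machinery and the identification with $\ell_1^w$ visible, which is the same viewpoint that later feeds into the proof of Proposition~\ref{coinc}(ii). Both arguments are valid; yours is the more transparent reduction.
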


\begin{proof}
Let $P:c_{0}\times\dots\times c_{0}\rightarrow\mathbb{K}$ be an $\left(
n_{1},\ldots,n_{m}\right)  $-homogeneous polynomial. Define the $M:=\left(
n_{1}+\cdots+n_{m}\right)  $-homogeneous polynomial $Q:c_{0}\rightarrow
\mathbb{K}$ by%
\[
Q(x):=P\left(  \left(  x_{\left(  i-1\right)  m+1}\right)  _{i\in\mathbb{N}%
},\ldots,\left(  x_{\left(  i-1\right)  m+m}\right)  _{i\in\mathbb{N}}\right)
\text{.}%
\]
Note that, since we are dealing with the $\sup$ norm, we have%
\[
\left\Vert Q\right\Vert \leq\left\Vert P\right\Vert
\]
and, since $Q$ is a scalar-valued $M$-homogeneous polynomial, there must be a
constant $C>0$ such that%
\[
\overset{\infty}{\underset{j=1}{%
{\displaystyle\sum}
}}\left\vert P\left(  \left(  x_{\left(  i-1\right)  m+1}^{\left(  j\right)
}\right)  _{i\in\mathbb{N}},\ldots,\left(  x_{\left(  i-1\right)
m+m}^{\left(  j\right)  }\right)  _{i\in\mathbb{N}}\right)  \right\vert
=\overset{\infty}{\underset{j=1}{%
{\displaystyle\sum}
}}\left\vert Q(x^{(j)})\right\vert \leq C\left\Vert P\right\Vert \left\Vert
(x^{(j)})_{j=1}^{\infty}\right\Vert _{w,1}^{M}\text{,}%
\]
whenever $(x^{(j)})_{j=1}^{\infty}\in\ell_{1}^{w}\left(  c_{0}\right)  $. In
particular,%
\[
\overset{\infty}{\underset{j=1}{%
{\displaystyle\sum}
}}\left\vert P(e_{j},\ldots,e_{j})\right\vert =\overset{\infty}{\underset
{j=1}{%
{\displaystyle\sum}
}}\left\vert Q(z^{(j)})\right\vert
\]
where, for each $j\in\mathbb{N}$,%
\[
\left(  z_{i}^{\left(  j\right)  }\right)  _{i\in\mathbb{N}}:=\left\{
\begin{array}
[c]{cc}%
1\text{,} & \text{if }\left(  j-1\right)  m+1\leq i\leq jm\\
0\text{,} & \text{otherwise}%
\end{array}
\right.  \text{.}%
\]
Since%
\[
\left\Vert (z^{(j)})_{j=1}^{\infty}\right\Vert _{w,1}=1\text{,}%
\]
then%
\[
\overset{\infty}{\underset{j=1}{%
{\displaystyle\sum}
}}\left\vert P(e_{j},\ldots,e_{j})\right\vert \leq C\left\Vert P\right\Vert
\text{.}%
\]
Applying the isometric isomorphism from $\mathcal{L}\left(  c_{0}%
;X_{k}\right)  $ onto $\ell_{1}^{w}(X_{k})$, for each $k=1,\ldots,m$, (see
\cite[Proposition 2.2]{diestel}) we are led to the final conclusion that every
$\left(  n_{1},\ldots n_{m}\right)  $-homogeneous polynomial $P:X_{1}%
\times\cdots\times X_{m}\rightarrow\mathbb{K}$ is absolutely $\left(
1;1\right)  $-summing.
\end{proof}

Recall that if $2\leq q\leq\infty$ and $\left(  r_{j}\right)  _{j=1}^{\infty}$
are the Rademacher functions, then $X$ has \textit{cotype }$q$ if there exists
$C_{q}\left(  X\right)  \geq0$ such that, for every $k\in\mathbb{N}$ and
$x_{1},\ldots,x_{k}\in X$,%
\[
\left(  \overset{k}{\underset{j=1}{%
{\displaystyle\sum}
}}\left\Vert x_{j}\right\Vert ^{q}\right)  ^{1/q}\leq C_{q}\left(  X\right)
\left(
{\displaystyle\int_{0}^{1}}
\left\Vert \overset{k}{\underset{j=1}{%
{\displaystyle\sum}
}}r_{j}\left(  t\right)  x_{j}\right\Vert ^{2}dt\right)  ^{1/2}\text{.}%
\]
To cover the case $q=\infty$, we replace $\left(
{\textstyle\sum_{j=1}^{k}}
\left\Vert x_{j}\right\Vert ^{q}\right)  ^{1/q}$ by $\max_{j\leq k}\left\Vert
x_{j}\right\Vert $. We denote $\inf\left\{  q;X\text{ has cotype }q\right\}  $
by $\cot X$.

The main connection between cotype and absolutely summing operators is given
by the following result:

\begin{theorem}
[Maurey--Talagrand \cite{talagr}]\label{mt}If $X$ has finite cotype $q$, then
the identity operator $id_{X}:X\rightarrow X$ is $\left(  q;1\right)
$-summing. The converse is true, except for $q=2$.
\end{theorem}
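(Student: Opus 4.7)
The two directions are of very different character: the forward implication reduces to a short computation combining the cotype inequality with a weak-type duality estimate, while the converse is the deep Talagrand contribution and, for $2<q<\infty$, essentially non-elementary.

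For the forward direction, I would fix a finite family $x_{1},\ldots,x_{k}\in X$ and, for each sign vector $\varepsilon=(\varepsilon_{j})\in\{-1,+1\}^{k}$, exploit the duality identity
\[
\Bigl\Vert \sum_{j=1}^{k}\varepsilon_{j}x_{j}\Bigr\Vert =\sup_{\varphi\in B_{X^{\prime}}}\Bigl\vert \sum_{j=1}^{k}\varepsilon_{j}\varphi(x_{j})\Bigr\vert \leq \sup_{\varphi\in B_{X^{\prime}}}\sum_{j=1}^{k}\vert\varphi(x_{j})\vert =\Vert(x_{j})_{j=1}^{k}\Vert_{w,1}.
\]
Specializing $\varepsilon_{j}=r_{j}(t)$ and integrating in $t\in[0,1]$, this bound combined with the cotype $q$ inequality yields
\[
\Bigl(\sum_{j=1}^{k}\Vert x_{j}\Vert^{q}\Bigr)^{1/q}\leq C_{q}(X)\Bigl(\int_{0}^{1}\Bigl\Vert \sum_{j=1}^{k}r_{j}(t)x_{j}\Bigr\Vert^{2}dt\Bigr)^{1/2}\leq C_{q}(X)\Vert(x_{j})_{j=1}^{k}\Vert_{w,1}.
\]
Taking $k\to\infty$, this shows that every $(x_{j})\in\ell_{1}^{w}(X)$ satisfies $(x_{j})\in\ell_{q}(X)$, so $id_{X}$ is $(q;1)$-summing with constant at most $C_{q}(X)$. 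The modification $\max_{j\leq k}\Vert x_{j}\Vert$ for $q=\infty$ requires no change in the argument.

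For the converse direction (with $q\neq 2$), I would argue by contrapositive: assuming $X$ fails to have cotype $q$, for each $n$ one extracts finite families $(x_{j}^{(n)})$ that violate the cotype estimate by an arbitrarily large factor, and the goal is to rescale and concatenate these into a single infinite sequence witnessing that $id_{X}$ is not $(q;1)$-summing. For $q=\infty$ this reduction is standard via the Maurey--Pisier characterization of cotype $\infty$ through uniform finite representability of $\ell_{\infty}^{n}$'s. For $2<q<\infty$, however, the crux is to upgrade an averaged Rademacher failure into a pointwise-tested weak $\ell_{1}$ failure, and this is precisely where the main obstacle lies: matching the implicit $L^{2}$-averaging in the cotype definition against the supremum over functionals hidden in $\Vert\cdot\Vert_{w,1}$ requires Talagrand's machinery of Gaussian processes and majorizing measures. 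I would accept that step from \cite{talagr} as a black box rather than reprove it; the genuine failure at $q=2$ (e.g.\ Pisier's examples) is what explains the unavoidable exception in the statement.
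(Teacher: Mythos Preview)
The paper does not supply a proof of this theorem at all: it is stated as a quoted result from \cite{talagr} and used only as a tool in Proposition~\ref{coinc}. So there is no ``paper's own proof'' to compare against; the entire statement is treated as a black box.

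That said, your proposal is sound. The forward implication you give is the standard two-line argument and is complete as written: the uniform bound $\bigl\Vert \sum_{j}\varepsilon_{j}x_{j}\bigr\Vert \leq \Vert(x_{j})\Vert_{w,1}$ for every sign choice, followed by the cotype inequality, immediately yields the $(q;1)$-summing estimate with constant $C_{q}(X)$. For the converse you correctly isolate Talagrand's contribution as the genuinely hard step and cite it rather than reprove it, which is exactly the paper's stance. One minor point of attribution: the counterexample showing the converse fails at $q=2$ is Talagrand's own, constructed in \cite{talagr}, not Pisier's; but this does not affect the mathematics. In short, you have gone beyond what the paper does by actually writing out the easy direction, and you handle the hard direction the same way the paper does---by citation.
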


By exploiting the notion of cotype, we demonstrate other coincidence results.

\begin{proposition}
\label{coinc}Let $m\in\mathbb{N}$ and $\left(  n_{1},\ldots,n_{m}\right)
\in\mathbb{N}^{m}$.

\begin{description}
\item[(i)] If $X_{j}$ has cotype $q_{j}<\infty$ for each $j=1,\ldots,m$, then%
\[
\mathcal{P}_{\text{\textit{as}}\left(  s;1\right)  }\left(  ^{n_{1}}%
X_{1},\ldots,^{n_{m}}X_{m};Y\right)  =\mathcal{P}\left(  ^{n_{1}}X_{1}%
,\ldots,^{n_{m}}X_{m};Y\right)  \text{,}%
\]
for every $Y$ and every $s>0$ such that $\frac{1}{s}\leq\frac{n_{1}}{q_{1}%
}+\cdots+\frac{n_{m}}{q_{m}}$.

\item[(ii)] If $Y$ has cotype $q<\infty$, then%
\[
\mathcal{P}_{\text{\textit{as}}\left(  q;1\right)  }\left(  ^{n_{1}}%
X_{1},\ldots,^{n_{m}}X_{m};Y\right)  =\mathcal{P}\left(  ^{n_{1}}X_{1}%
,\ldots,^{n_{m}}X_{m};Y\right)  \text{,}%
\]
for every $X_{1},\ldots,X_{m}$.
\end{description}
\end{proposition}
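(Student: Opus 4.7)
The plan is to dispatch part (ii) first using Lemma \ref{dvmultipol} combined with the Maurey--Talagrand Theorem \ref{mt}, and to handle part (i) separately by combining cotype with H\"older's inequality applied term by term.

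For (ii), fix $P\in\mathcal{P}(^{n_{1}}X_{1},\ldots,^{n_{m}}X_{m};Y)$ and sequences $(x_{j}^{(k)})_{j=1}^{\infty}\in\ell_{1}^{w}(X_{k})$ for $k=1,\ldots,m$. For each $\psi\in B_{Y^{\prime}}$ the composition $\psi\circ P$ is a scalar-valued $(n_{1},\ldots,n_{m})$-homogeneous polynomial with $\|\psi\circ P\|\leq\|P\|$, so Lemma \ref{dvmultipol} guarantees $\sum_{j}|\psi(P(x_{j}^{(1)},\ldots,x_{j}^{(m)}))|<\infty$. A closed-graph argument applied to the map $\psi\mapsto(\psi(P(x_{j}^{(1)},\ldots,x_{j}^{(m)})))_{j}$ from $Y^{\prime}$ to $\ell_{1}$ then promotes this pointwise $\ell_{1}$-summability into a uniform bound over $B_{Y^{\prime}}$, which is exactly the statement that $(P(x_{j}^{(1)},\ldots,x_{j}^{(m)}))_{j}\in\ell_{1}^{w}(Y)$. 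Since $Y$ has cotype $q$, Theorem \ref{mt} gives that $\mathrm{id}_{Y}$ is $(q;1)$-summing, so the sequence lies in $\ell_{q}(Y)$, proving (ii).

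For (i), I would reduce to the critical exponent $s_{0}$ defined by $1/s_{0}=n_{1}/q_{1}+\cdots+n_{m}/q_{m}$, since for any larger $s$ the inclusion $\ell_{s_{0}}(Y)\hookrightarrow\ell_{s}(Y)$ is automatic. The polynomial bound $\|P(x_{j}^{(1)},\ldots,x_{j}^{(m)})\|\leq\|P\|\prod_{k}\|x_{j}^{(k)}\|^{n_{k}}$ reduces matters to estimating $\sum_{j}\prod_{k}\|x_{j}^{(k)}\|^{s_{0}n_{k}}$. The exponents satisfy $\sum_{k}(s_{0}n_{k})/q_{k}=1$ and each H\"older conjugate $q_{k}/(s_{0}n_{k})\geq1$, so generalized H\"older yields the bound $\prod_{k}\bigl(\sum_{j}\|x_{j}^{(k)}\|^{q_{k}}\bigr)^{s_{0}n_{k}/q_{k}}$. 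Finally, since $X_{k}$ has cotype $q_{k}$ and $\|\sum_{j}r_{j}(t)x_{j}^{(k)}\|\leq\|(x_{j}^{(k)})_{j}\|_{w,1}$ for every $t$ (by duality and $|r_{j}(t)|\leq1$), the cotype inequality yields $(\sum_{j}\|x_{j}^{(k)}\|^{q_{k}})^{1/q_{k}}\leq C_{q_{k}}(X_{k})\|(x_{j}^{(k)})_{j}\|_{w,1}$, completing the chain of estimates.

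I expect the main subtlety to be in (ii), namely the passage from \emph{every scalar trace $\psi\circ P$ is $(1;1)$-summing} to \emph{the vector-valued sequence itself lies in $\ell_{1}^{w}(Y)$}; this is the hinge that lets Lemma \ref{dvmultipol} interact cleanly with Maurey--Talagrand, and once it is in place the rest is definitional. In (i) the only care required is to align the H\"older exponents with the cotype indices via $\sum_{k}n_{k}/q_{k}=1/s_{0}$ and to verify the conjugacy condition $q_{k}\geq s_{0}n_{k}$, which is forced by the hypothesis on $s$; beyond that the estimate is essentially dictated by the polynomial inequality and the cotype bound.
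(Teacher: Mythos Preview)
Your argument is correct and matches the paper's approach in both parts: for (i) the paper also combines the pointwise polynomial bound with H\"older and the $(q_k;1)$-summability of $\mathrm{id}_{X_k}$ (citing Maurey--Talagrand where you unpack the cotype inequality directly), and for (ii) the paper likewise passes from Lemma~\ref{dvmultipol} to $\ell_1^w(Y)$ and then applies $(q;1)$-summability of $\mathrm{id}_Y$. The only cosmetic differences are that the paper works with general $s$ rather than reducing to the critical $s_0$, and it invokes the Open Mapping Theorem (to get a constant in terms of $\|P\|$) where you use closed graph on $Y'\to\ell_1$; neither changes the substance.
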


\begin{proof}
(i) By \cite[Corollary 3.4]{Vel}, H\"{o}lder's Inequality and
Maurey--Talagrand's Theorem we have that%
\begin{align*}
\left(  \overset{n}{\underset{j=1}{%
{\displaystyle\sum}
}}\left\Vert P\left(  x_{j}^{\left(  1\right)  },\ldots,x_{j}^{\left(
m\right)  }\right)  \right\Vert ^{s}\right)  ^{\frac{1}{s}}  &  =\left\Vert
P\right\Vert \left(  \overset{n}{\underset{j=1}{%
{\displaystyle\sum}
}}\left\Vert x_{j}^{\left(  1\right)  }\right\Vert ^{n_{1}s}\cdots\left\Vert
\left(  x_{j}^{\left(  m\right)  }\right)  \right\Vert ^{n_{m}s}\right)
^{\frac{1}{s}}\\
&  \leq\left\Vert P\right\Vert \left(  \overset{n}{\underset{j=1}{%
{\displaystyle\sum}
}}\left\Vert x_{j}^{\left(  1\right)  }\right\Vert ^{q_{1}}\right)
^{\frac{n_{1}}{q_{1}}}\cdots\left(  \overset{n}{\underset{j=1}{%
{\displaystyle\sum}
}}\left\Vert x_{j}^{\left(  m\right)  }\right\Vert ^{q_{m}}\right)
^{\frac{n_{m}}{q_{m}}}\\
&  \leq\left\Vert P\right\Vert \left\Vert id_{X_{1}}\right\Vert _{as\left(
q_{1};1\right)  }^{n_{1}}\cdots\left\Vert id_{X_{m}}\right\Vert _{as\left(
q_{m};1\right)  }^{n_{m}}\overset{m}{\underset{k=1}{%
{\displaystyle\prod}
}}\left\Vert \left(  x_{j}^{\left(  k\right)  }\right)  _{j=1}^{n}\right\Vert
_{w,1}^{n_{k}}%
\end{align*}
for all $n\in\mathbb{N}$ and all $x_{1}^{\left(  k\right)  },\ldots
,x_{n}^{\left(  k\right)  }\in X_{k}$, with $k=1,\ldots,m$. Then, $P$ is
absolutely $\left(  s;1\right)  $-summing.

(ii) By Lemma \ref{dvmultipol}, it is straightforward that every
multipolynomial $P$ in \linebreak$\mathcal{P}\left(  ^{n_{1}}X_{1}%
,\ldots,^{n_{m}}X_{m};Y\right)  $ is such that%
\[
\left(  P\left(  x_{j}^{\left(  1\right)  },\ldots,x_{j}^{\left(  m\right)
}\right)  \right)  _{j=1}^{\infty}\in\ell_{1}^{w}\left(  Y\right)  \text{,}%
\]
whenever $\left(  x_{j}^{\left(  k\right)  }\right)  _{j=1}^{\infty}\in
\ell_{1}^{w}\left(  X_{k}\right)  $, $k=1,\ldots,m$. Besides, the Open Mapping
Theorem provides a constant $C\geq0$ such that%
\[
\left\Vert \left(  P\left(  x_{j}^{\left(  1\right)  },\ldots,x_{j}^{\left(
m\right)  }\right)  \right)  _{j=1}^{\infty}\right\Vert _{w,1}\leq C\left\Vert
P\right\Vert \overset{m}{\underset{k=1}{%
{\displaystyle\prod}
}}\left\Vert \left(  x_{j}^{\left(  k\right)  }\right)  _{j=1}^{\infty
}\right\Vert _{w,q_{k}}^{n_{k}}\text{,}%
\]
for all $\left(  x_{j}^{\left(  k\right)  }\right)  _{j=1}^{\infty}\in\ell
_{1}^{w}\left(  X_{k}\right)  $, $k=1,\ldots,m$. Therefore, it follows from
Maurey--Talagrand's Theorem that%
\begin{align*}
\left(  \overset{\infty}{\underset{j=1}{%
{\displaystyle\sum}
}}\left\Vert P\left(  x_{j}^{\left(  1\right)  },\ldots,x_{j}^{\left(
m\right)  }\right)  \right\Vert ^{q}\right)  ^{\frac{1}{q}}  &  =\left(
\overset{\infty}{\underset{j=1}{%
{\displaystyle\sum}
}}\left\Vert id_{Y}\left(  P\left(  x_{j}^{\left(  1\right)  },\ldots
,x_{j}^{\left(  m\right)  }\right)  \right)  \right\Vert ^{q}\right)
^{\frac{1}{q}}\\
&  \leq\left\Vert id_{Y}\right\Vert _{as\left(  q;1\right)  }\left\Vert
\left(  P\left(  x_{j}^{\left(  1\right)  },\ldots,x_{j}^{\left(  m\right)
}\right)  \right)  _{j=1}^{\infty}\right\Vert _{w,1}\\
&  \leq\left\Vert id_{Y}\right\Vert _{as\left(  q;1\right)  }C\left\Vert
P\right\Vert \overset{m}{\underset{k=1}{%
{\displaystyle\prod}
}}\left\Vert \left(  x_{j}^{\left(  k\right)  }\right)  _{j=1}^{\infty
}\right\Vert _{w,1}^{n_{k}}%
\end{align*}
for all $\left(  x_{j}^{\left(  k\right)  }\right)  _{j=1}^{\infty}\in\ell
_{1}^{w}\left(  X_{k}\right)  $, $k=1,\ldots,m$. We have shown that $P$ is
absolutely $\left(  q;1\right)  $-summing.
\end{proof}

In particular, we extract the coincidence results for the class of
polynomials/multilinear mappings due to Botelho \cite{boroyalirish}.

\begin{corollary}
[{\cite[Theorem 2.2]{boroyalirish}}]Let $m\in\mathbb{N}$.

\begin{description}
\item[(i)] If $X$ has cotype $mq<\infty$, then%
\[
\mathcal{P}_{\text{\textit{as}}\left(  q;1\right)  }\left(  ^{m}X;Y\right)
=\mathcal{P}\left(  ^{m}X;Y\right)  \text{, for every }Y\text{.}%
\]

\item[(ii)] If $Y$ has cotype $q$, then%
\[
\mathcal{P}_{\text{\textit{as}}\left(  q;1\right)  }\left(  ^{m}X;Y\right)
=\mathcal{P}\left(  ^{m}X;Y\right)  \text{, for every }X\text{.}%
\]

\end{description}
\end{corollary}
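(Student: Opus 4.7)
The plan is to obtain this corollary as an immediate specialization of Proposition \ref{coinc} to the extreme case of a single factor equipped with higher-degree homogeneity, which is precisely the setting of $m$-homogeneous polynomials from one Banach space $X$ into $Y$. To avoid a clash between the $m$ appearing in the corollary (the degree of the polynomial) and the $m$ appearing in Proposition \ref{coinc} (the number of factors), I will temporarily write $\widetilde{m}$ for the latter, and then specialise by setting $\widetilde{m}=1$, $X_{1}=X$, and $n_{1}=m$. With this identification, an $n_{1}$-homogeneous polynomial on $X_{1}$ with values in $Y$ is exactly an $m$-homogeneous polynomial on $X$ with values in $Y$, and the absolute summability conditions match verbatim.

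For part (i), I will apply Proposition \ref{coinc}(i) with $q_{1}=mq$ (from the cotype-$mq$ assumption on $X$) and $s=q$. The numerical hypothesis $\frac{1}{s}\leq\frac{n_{1}}{q_{1}}$ becomes $\frac{1}{q}\leq\frac{m}{mq}$, which holds with equality. Hence the proposition yields $\mathcal{P}_{as(q;1)}(^{m}X;Y)=\mathcal{P}(^{m}X;Y)$ for every $Y$.

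For part (ii), I will invoke Proposition \ref{coinc}(ii) under the same specialisation $\widetilde{m}=1$, $X_{1}=X$, $n_{1}=m$; since $Y$ has cotype $q$ by hypothesis, the conclusion $\mathcal{P}_{as(q;1)}(^{m}X;Y)=\mathcal{P}(^{m}X;Y)$ for every $X$ is immediate. The main (and essentially only) obstacle is cosmetic: one has to keep the two roles of the symbol $m$ clearly separated. Beyond that, both statements reduce to a one-line consequence of Proposition \ref{coinc}, and no additional technique from \cite{boroyalirish} is required.
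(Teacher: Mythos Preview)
Your proof is correct and follows exactly the same approach as the paper, which simply states ``Apply Proposition \ref{coinc} with $m=1$.'' Your added care in distinguishing the two roles of $m$ and verifying the numerical condition $\frac{1}{q}\leq\frac{m}{mq}$ is a welcome clarification but introduces no new idea.
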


\begin{proof}
Apply Proposition \ref{coinc} with $m=1$.
\end{proof}

\begin{corollary}
[{\cite[Theorem 2.5]{boroyalirish}}]Let $m\in\mathbb{N}$.

\begin{description}
\item[(i)] If $X_{j}$ has cotype $q_{j}<\infty$ for each $j=1,\ldots,m$, then%
\[
\mathcal{L}_{\text{\textit{as}}\left(  s;1\right)  }\left(  X_{1},\ldots
,X_{m};Y\right)  =\mathcal{L}\left(  X_{1},\ldots,X_{m};Y\right)  \text{,}%
\]
for every $Y$ and every $s>0$ such that $\frac{1}{s}\leq\frac{1}{q_{1}}%
+\cdots+\frac{1}{q_{m}}$.

\item[(ii)] If $Y$ has cotype $q$, then%
\[
\mathcal{L}_{\text{\textit{as}}\left(  q;1\right)  }\left(  X_{1},\ldots
,X_{m};Y\right)  =\mathcal{L}\left(  X_{1},\ldots,X_{m};Y\right)  \text{,}%
\]
for every $X_{1},\ldots,X_{m}$.
\end{description}
\end{corollary}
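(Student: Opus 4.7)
The plan is to obtain both statements as an immediate specialization of Proposition \ref{coinc} by setting $n_{1}=\cdots=n_{m}=1$. As observed after the definition of $(n_{1},\ldots,n_{m})$-homogeneous polynomials, the case $m>1$ with $n_{1}=\cdots=n_{m}=1$ is precisely the case of $m$-linear mappings, so
\[
\mathcal{P}\left(  ^{1}X_{1},\ldots,^{1}X_{m};Y\right)  =\mathcal{L}\left(
X_{1},\ldots,X_{m};Y\right)
\]
with identical norms, and likewise $\mathcal{P}_{as(p;1)}(^{1}X_{1},\ldots,^{1}X_{m};Y)=\mathcal{L}_{as(p;1)}(X_{1},\ldots,X_{m};Y)$ at the level of the defining sequence condition in Definition \ref{defabsmultilin}.

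For part (i), I would substitute $n_{k}=1$ into Proposition \ref{coinc}(i); the hypothesis $\frac{1}{s}\leq\frac{n_{1}}{q_{1}}+\cdots+\frac{n_{m}}{q_{m}}$ collapses to $\frac{1}{s}\leq\frac{1}{q_{1}}+\cdots+\frac{1}{q_{m}}$, which is exactly the assumption of Botelho's theorem, and the coincidence statement becomes the claimed $\mathcal{L}_{as(s;1)}(X_{1},\ldots,X_{m};Y)=\mathcal{L}(X_{1},\ldots,X_{m};Y)$. For part (ii), the same substitution into Proposition \ref{coinc}(ii) directly yields $\mathcal{L}_{as(q;1)}(X_{1},\ldots,X_{m};Y)=\mathcal{L}(X_{1},\ldots,X_{m};Y)$ whenever $Y$ has cotype $q<\infty$.

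There is no genuine obstacle: this corollary is precisely the extreme-case instantiation of the multipolynomial result, and the philosophy of the paper is exactly that the multipolynomial framework should recover the multilinear statements by this kind of specialization. The only point worth noting is the clean identification of $\mathcal{P}(^{1}X_{1},\ldots,^{1}X_{m};Y)$ with $\mathcal{L}(X_{1},\ldots,X_{m};Y)$, which was already made explicit in the preliminary discussion, so the argument reduces to a single line.
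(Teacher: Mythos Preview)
Your proposal is correct and is exactly the paper's own proof: apply Proposition \ref{coinc} with $n_{1}=\cdots=n_{m}=1$, using the identification of $(1,\ldots,1)$-homogeneous polynomials with $m$-linear mappings. The paper states this in a single line, and your elaboration simply makes explicit the identification that justifies it.
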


\begin{proof}
Apply Proposition \ref{coinc} with $n_{1}=\cdots=n_{m}=1$.
\end{proof}

\section{Main result}

Let $m,n_{1},\ldots,n_{m}$ be natural numbers and let $X_{1},\ldots,X_{m}$ be
infinite dimensional Banach spaces with normalized unconditional Schauder
basis $\left(  x_{n}^{\left(  k\right)  }\right)  _{n\in\mathbb{N}}$,
$k=1,\ldots,m$. We define%
\[
\eta=\eta\left(  n_{1},\ldots,n_{m},X_{1},\ldots,X_{m},\left(  x_{n}^{\left(
1\right)  }\right)  _{n\in\mathbb{N}},\ldots,\left(  x_{n}^{\left(  m\right)
}\right)  _{n\in\mathbb{N}}\right)
\]
by%
\[
\eta=\inf\left\{  t;\left(  \overset{m}{\underset{k=1}{%
{\displaystyle\prod}
}}\left(  a_{j}^{\left(  k\right)  }\right)  ^{n_{k}}\right)  \in\ell
_{t}\text{ whenever }x_{k}=\underset{j=1}{\overset{\infty}{%
{\displaystyle\sum}
}}a_{j}^{\left(  k\right)  }x_{j}^{\left(  k\right)  }\in X_{k}\text{,
}k=1,\ldots,m\right\}
\]
and investigate the following general question:

\begin{problem}
\label{prob}If $\mathcal{P}_{\text{\textit{as}}\left(  q,1\right)  }\left(
^{n_{1}}X_{1},\ldots,^{n_{m}}X_{m};Y\right)  =\mathcal{P}\left(  ^{n_{1}}%
X_{1},\ldots,^{n_{m}}X_{m};Y\right)  $, then how is the behavior of $\eta$?
\end{problem}

The techniques to solve this kind of problem dates back to the seminal paper
\cite{lp} where J. Lindenstrauss and A. Pe\l czy\'{n}ski provide a beautiful
theorem stating that if $X$ is an infinite dimensional Banach space with an
unconditional Schauder basis and every linear operator from $X$ into an
infinite dimensional Banach space $Y$ is absolutely $\left(  1;1\right)
$-summing, then $X$ is isomorphic to $\ell_{1}\left(  \Gamma\right)  $ and $Y$
is isomorphic to a Hilbert space.

Firstly, recall that $Y$ \textit{finitely factors }the formal inclusion
$\ell_{p}\rightarrow\ell_{\infty}$ for $0<\delta<1$ if for every
$n\in\mathbb{N}$ there exist $y_{1},\ldots,y_{n}\in Y$ such that%
\[
\left(  1-\delta\right)  \left\Vert a\right\Vert _{\infty}\leq\left\Vert
\underset{j=1}{\overset{n}{%
{\displaystyle\sum}
}}a_{j}y_{j}\right\Vert \leq\left\Vert a\right\Vert _{p}\text{, for all
}a=\left(  a_{j}\right)  _{j=1}^{n}\in\ell_{p}^{n}\text{.}%
\]
Note that $1-\delta\leq\left\Vert y_{j}\right\Vert \leq1$, for all $j$.

For the extreme cases, some partial answers to the Problem \ref{prob} are
already known. More precisely, when $m=1$ and $n_{1}=1$, we regress to the
linear setting which as we comment has a solution in \cite{lp}. When $m=1$ and
$n_{1}=m>1$, D. Pellegrino \cite{pelcot} has been shown the following:

\begin{theorem}
[{\cite[Theorem 5]{pelcot}}]\label{thm5}Let $X$ and $Y$ be infinite
dimensional Banach spaces. Suppose that $X$ has an unconditional Schauder
basis. If $Y$ finitely factors the formal inclusion $\ell_{p}\rightarrow
\ell_{\infty}$ for some $\delta$ and $\mathcal{P}_{\text{\textit{as}}\left(
q;1\right)  }\left(  ^{m}X;Y\right)  =\mathcal{P}\left(  ^{m}X;Y\right)  $, then

\begin{description}
\item[(a)] $\eta\leq pq/\left(  p-q\right)  $, if $q<p$;

\item[(b)] $\eta\leq q$, if $q\leq p/2$.
\end{description}
\end{theorem}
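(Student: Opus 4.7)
My plan is to adapt the Lindenstrauss--Pe\l{}czy\'{n}ski template (in the polynomial form developed by Pellegrino) and turn the coincidence hypothesis into a Hadamard-multiplier problem $\ell_{p}\to\ell_{q}$ on the basis coefficients of $X$. The first step is to upgrade the equality $\mathcal{P}_{as(q;1)}(^{m}X;Y)=\mathcal{P}(^{m}X;Y)$, via the polynomial Banach--Steinhaus theorem acknowledged in the preamble, to the uniform estimate
\[
\bigl\|(P(u_{j}))_{j}\bigr\|_{q}\;\le\;C\,\|P\|\,\|(u_{j})\|_{w,1}^{m},\qquad P\in\mathcal{P}(^{m}X;Y),\ (u_{j})\in\ell_{1}^{w}(X).
\]
Everything else is then a matter of plugging in well-chosen test data.

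For the test polynomial, let $(x_{j}^{X})$ be the normalised unconditional basis of $X$ with coordinate functionals $(x_{j}^{*})$ and unconditional constant $K_{u}$. For each $n$, the hypothesis on $Y$ supplies $y_{1},\ldots,y_{n}\in Y$ obeying the bilateral factoring inequality, and for scalars $c_{1},\ldots,c_{n}$ to be optimised I would set
\[
P_{n}(z):=\sum_{j=1}^{n}c_{j}^{\,m}\bigl(x_{j}^{*}(z)\bigr)^{m}y_{j}.
\]
The upper factoring inequality yields $\|P_{n}\|\le K_{u}^{\,m}\|(c_{j}^{\,m})_{j=1}^{n}\|_{p}$. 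For an arbitrary $y=\sum_{i}b_{i}x_{i}^{X}\in X$ with $b_{i}\ge 0$, take $u_{i}:=b_{i}x_{i}^{X}$; unconditionality gives $(u_{i})\in\ell_{1}^{w}(X)$ with norm at most $K_{u}\|y\|$, while $x_{j}^{*}(u_{i})=b_{i}\delta_{ij}$ forces $P_{n}(u_{i})=b_{i}^{\,m}c_{i}^{\,m}y_{i}$, whose norm is at least $(1-\delta)b_{i}^{\,m}c_{i}^{\,m}$ by the lower factoring inequality. Substituting into the uniform estimate and absorbing constants leads to
\[
\Bigl(\sum_{i=1}^{n}b_{i}^{\,mq}c_{i}^{\,mq}\Bigr)^{1/q}\;\le\;C'\,\|(c_{j}^{\,m})_{j=1}^{n}\|_{p}\,\|y\|^{m},
\]
uniformly in $n$ and the free scalars $(c_{j})$. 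This is precisely the statement that the diagonal multiplier $(b_{i}^{\,m})\colon\ell_{p}\to\ell_{q}$ is bounded; Hölder duality, valid since $q<p$, then forces $(b_{i}^{\,m})\in\ell_{pq/(p-q)}$, and letting $y$ range over $X$ yields $\eta\le pq/(p-q)$, proving (a).

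For (b), the sharper bound $\eta\le q$ requires squeezing out the extra room encoded by $q\le p/2$; a direct diagonal test of the kind above will not suffice, because the multiplier inequality it produces is already tight at exponent $pq/(p-q)$. My plan is to randomise the construction and use Khintchine's inequality: replace $P_{n}$ by a Rademacher twist $P_{n}^{r}(z):=\sum_{j}r_{j}c_{j}^{\,m}(x_{j}^{*}(z))^{m}y_{j}$ (the upper factoring bound is sign-invariant, so the norm control above is preserved uniformly in $r$) and combine this with an independent randomisation of the test sequence via signs attached to a block decomposition of $y$; after applying the uniform estimate to each configuration, average in both sets of signs so that the right-hand $\ell_{p}$-norm of $(c_{j}^{\,m})$ is replaced by a square-function norm, which is dominated by an $\ell_{q}$-norm precisely under the hypothesis $2q\le p$. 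The multiplier argument from the previous paragraph then upgrades the conclusion from $\ell_{pq/(p-q)}$ to $\ell_{q}$. The main obstacle is precisely the exponent bookkeeping in this two-layer averaging, and it is here that $q\le p/2$ must be invoked in its sharp form; should the direct randomisation prove recalcitrant, the natural fallback is polarisation, replacing $P_{n}$ by its associated symmetric $m$-linear form and exploiting the finer multilinear structure to recover the missing factor of two. Part (a), in contrast, is essentially Hölder duality once the test data are set up.
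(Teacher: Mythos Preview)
Your treatment of part~(a) is essentially the paper's argument (and Pellegrino's original): build a weighted diagonal test polynomial, bound its sup norm via the $\ell_{p}$ upper inequality on the $y_{j}$, feed the scaled basis vectors $b_{i}x_{i}^{X}$ into the uniform $(q;1)$-summing estimate, and then invoke H\"older duality to land in $\ell_{pq/(p-q)}$. The only cosmetic difference is that the paper parametrises the weights as $|\mu_{j}|^{1/q}$ with $\sum|\mu_{j}|^{p/q}=1$, while you write $c_{j}^{m}$; after taking the supremum these are the same thing.

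Part~(b), however, has a genuine gap. Your claim that ``a direct diagonal test of the kind above will not suffice'' is precisely where you go wrong, and it leads you into an unnecessary (and, as you yourself sense, shaky) randomisation scheme. The paper's argument for~(b) is a one-line \emph{bootstrap} from~(a): take the \emph{unweighted} polynomial
\[
S(z)=\sum_{j=1}^{n}\bigl(x_{j}^{*}(z)\bigr)^{m}y_{j}.
\]
To bound $\|S\|$ you need control of $\|(b_{j}^{m})_{j}\|_{p}$ for every $y=\sum b_{j}x_{j}^{X}$; but part~(a) already gives $\|(b_{j}^{m})_{j}\|_{pq/(p-q)}\le C\|y\|^{m}$, and the hypothesis $q\le p/2$ is exactly the inequality $pq/(p-q)\le p$, so $\ell_{pq/(p-q)}\hookrightarrow\ell_{p}$ with a universal constant. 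Hence $\|S\|$ is bounded independently of $n$, and now applying the $(q;1)$-summing inequality to $S$ at the same test sequence $u_{i}=b_{i}x_{i}^{X}$ yields directly $\sum_{i}|b_{i}|^{mq}\le C'\|y\|^{mq}$, i.e.\ $\eta\le q$. No Khintchine, no polarisation, no second layer of signs. Your proposed averaging, incidentally, would at best replace $\|(c_{j}^{m})\|_{p}$ by an $\ell_{2}$-type square function, and ``$\ell_{2}$ dominated by $\ell_{q}$'' requires $q\le 2$, not $q\le p/2$; so the exponent bookkeeping you worry about does not actually close.
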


\begin{theorem}
[{\cite[Theorem 5]{ppoloni}}]\label{thm5poloni}Let $X$ be an infinite
dimensional Banach space with an unconditional Schauder basis. If
$\mathcal{P}_{\text{\textit{as}}\left(  q;1\right)  }\left(  ^{m}X\right)
=\mathcal{P}\left(  ^{m}X\right)  $, then

\begin{description}
\item[(a)] $\eta\leq q/\left(  1-q\right)  $, if $q<1$;

\item[(b)] $\eta\leq q$, if $q\leq1/2$.
\end{description}
\end{theorem}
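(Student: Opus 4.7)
The plan is to adapt the Lindenstrauss--Pe{\l}czy\'nski strategy, as implemented by Pellegrino in Theorem \ref{thm5}, to the scalar-valued setting $Y=\mathbb{K}$. Heuristically, the scalar field is a target for which the exponent $p$ in Theorem \ref{thm5} collapses to $p=1$, so the two regimes $\eta\le pq/(p-q)$ (for $q<p$) and $\eta\le q$ (for $q\le p/2$) become the stated bounds $\eta\le q/(1-q)$ (for $q<1$) and $\eta\le q$ (for $q\le 1/2$). The argument splits into three movements: upgrade the coincidence hypothesis to a uniform coordinate inequality; probe it against diagonal polynomials built from the unconditional basis; sharpen the outcome in case (b) via a Rademacher averaging trick.

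First, the Open Mapping Theorem (or the closed graph theorem in the quasi-Banach regime $q<1$) applied to the continuous inclusion $\mathcal{P}_{as(q;1)}({}^mX)\hookrightarrow\mathcal{P}({}^mX)$ delivers a constant $C>0$ with $\|P\|_{as(q;1)}\le C\|P\|$ for every $P$. For $x=\sum_j a_j x_j\in X$, unconditionality of the basis gives $(a_jx_j)_j\in\ell_1^w(X)$ with weak-$1$-norm at most $K_u\|x\|$, where $K_u$ is the unconditional basis constant. Combining these yields the master inequality
\[
\Bigl(\sum_j|a_j|^{mq}|P(x_j)|^q\Bigr)^{1/q}\le CK_u\,\|P\|\,\|x\|\qquad\text{for all }P\in\mathcal{P}({}^mX).
\]

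I would then probe this inequality with the diagonal $m$-homogeneous polynomials $P_b(y)=\sum_j b_j\bigl(x_j^*(y)\bigr)^m$, where $(x_j^*)$ is the biorthogonal sequence and $(b_j)$ is finitely supported. Since $P_b(x_j)=b_j$ by biorthogonality, the master inequality reduces to a scalar duality statement between $(|a_j|^m)$ and $(b_j)$, modulated by $\|P_b\|$. For case (a), applying H\"older's inequality with conjugate exponents $1/q$ and $1/(1-q)$ (valid since $q<1$) and optimizing over $(b_j)$ should convert the bound into $(|a_j|^m)\in\ell_{q/(1-q)+\varepsilon}$ for every $\varepsilon>0$, establishing $\eta\le q/(1-q)$. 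For case (b) I would randomize: consider $P_{b,\epsilon}(y)=\sum_j\epsilon_j b_j(x_j^*(y))^m$ for Rademacher signs $\epsilon_j=\pm 1$, average over $\epsilon$, and apply Khintchine's inequality; in the regime $q\le 1/2$ the equivalence of the $L^q$ and $L^2$ Rademacher moments supplies exactly the improvement needed to replace $q/(1-q)$ by $q$.

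The principal obstacle will be controlling $\sup_\epsilon\|P_{b,\epsilon}\|$ uniformly in $(b_j)$ using only the unconditionality of the basis. Since no cotype or further geometric hypothesis is placed on $X$, this upper bound must be extracted purely from the diagonal structure of the $P_{b,\epsilon}$ together with the unconditional basis constant of $(x_j)$, and carrying the constants cleanly through the Rademacher averaging in case (b) is the most delicate part of the plan. Once this uniform boundedness is secured, the duality steps in both cases reduce to standard $\ell_q$ manipulations that yield the stated thresholds for $\eta$.
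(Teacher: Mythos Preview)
Your approach to (a) is essentially the paper's (cf.\ the proof of Lemma~\ref{jmaamultipol} specialized to $Y=\mathbb{K}$, $p=1$, $y_j=1$): the diagonal polynomials $P_b$ and the duality/optimization over $(b_j)$ are exactly the weighted polynomials $P(x)=\sum_j|\mu_j|^{1/q}a_j^m$ and the $\ell_{1/q}$--$\ell_{1/(1-q)}$ pairing used there. (One slip: the right side of your master inequality should carry $\|x\|^m$, since the weak-$1$ norm enters to the $m$th power for an $m$-homogeneous polynomial.) The gap is in (b). Randomizing the signs of $b_j$ leaves the left side of your master inequality unchanged---it depends only on $|b_j|$---so any gain would have to come from bounding $\mathbb{E}_\epsilon\|P_{b,\epsilon}\|$ strictly below the $\ell_1$ bound $\sum_j|b_j|$ already exploited in (a). But Khintchine only controls $\mathbb{E}_\epsilon|P_{b,\epsilon}(y)|$ for each fixed $y$, not $\mathbb{E}_\epsilon\sup_y|P_{b,\epsilon}(y)|$, and with no cotype hypothesis on $X$ there is no device for interchanging the supremum and the expectation. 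Moreover, the Khintchine $L^q$--$L^2$ equivalence holds for every $q>0$ and does not single out $q\le 1/2$; your plan never explains where that threshold actually enters.

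The mechanism in \cite{ppoloni} (and in Lemma~\ref{jmaamultipol}) is a \emph{bootstrap} from (a), not randomization. Part (a) delivers
\[
\Bigl(\sum_j|a_j|^{mq/(1-q)}\Bigr)^{(1-q)/q}\le A\,\|x\|^m.
\]
The condition $q\le 1/2$ is exactly $q/(1-q)\le 1$, so this $\ell_{q/(1-q)}$ estimate dominates the $\ell_1$ norm of $(|a_j|^m)_j$ and therefore bounds the norm of the \emph{unweighted} polynomial $S(y)=\sum_{j\le n}(x_j^*(y))^m$ uniformly in $n$. Feeding $S$ back into the $(q;1)$-summing inequality with the sequence $(a_jx_j)_j$ gives $\bigl(\sum_j|a_j|^{mq}\bigr)^{1/q}\le C\|x\|^m$, i.e.\ $\eta\le q$. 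The threshold $q\le 1/2$ is thus precisely the inequality $q/(1-q)\le 1$ that makes this second pass possible, and has nothing to do with Rademacher moments.
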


At another end, that is, when $m>1$ and $n_{1}=\cdots=n_{m}=1$, the following
has been proved:

\begin{theorem}
[{\cite[Theorem 8]{pelcot}}]\label{thm8}Let $Y$ be an infinite dimensional
Banach space and let $X_{1},\ldots,X_{m}$ denote infinite dimensional Banach
spaces with unconditional Schauder basis. If $Y$ finitely factors the formal
inclusion $\ell_{p}\rightarrow\ell_{\infty}$ for some $\delta$ and
$\mathcal{L}_{\text{\textit{as}}\left(  q;1\right)  }\left(  X_{1}%
,\ldots,X_{m};Y\right)  =\mathcal{L}\left(  X_{1},\ldots,X_{m};Y\right)  $, then

\begin{description}
\item[(a)] $\eta\leq pq/\left(  p-q\right)  $, if $q<p$;

\item[(b)] $\eta\leq q$, if $q\leq p/2$.
\end{description}
\end{theorem}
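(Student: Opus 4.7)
My plan is to follow the Lindenstrauss--Pe\l czy\'nski/Pellegrino blueprint: upgrade the hypothesis to a uniform summing inequality, plug in a specific test $m$-linear map built from the finite factorization of $\ell_{p}\to\ell_{\infty}$ in $Y$, and evaluate it on sequences tailored to the unconditional bases of the $X_{k}$'s and the coefficients $a_{j}^{(k)}$.

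First, invoking the multilinear Banach--Steinhaus principle (of the same flavour already used in this paper for multipolynomials), the equality $\mathcal{L}_{\text{\textit{as}}(q;1)}(X_{1},\ldots,X_{m};Y)=\mathcal{L}(X_{1},\ldots,X_{m};Y)$ upgrades to the existence of a constant $K>0$ such that
\[
\Bigl(\sum_{j=1}^{\infty}\|T(z_{j}^{(1)},\ldots,z_{j}^{(m)})\|^{q}\Bigr)^{1/q}\le K\|T\|\prod_{k=1}^{m}\|(z_{j}^{(k)})_{j}\|_{w,1}
\]
for every $T\in\mathcal{L}(X_{1},\ldots,X_{m};Y)$ and every $(z_{j}^{(k)})_{j}\in\ell_{1}^{w}(X_{k})$. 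For each $n\in\mathbb{N}$, with $y_{1},\ldots,y_{n}\in Y$ supplied by the finite factorization, I would test this inequality against
\[
T_{n}(u_{1},\ldots,u_{m}):=\sum_{j=1}^{n}u_{1}(j)\cdots u_{m}(j)\,y_{j},
\]
where $u_{k}(j)$ denotes the $j$-th coordinate of $u_{k}$ relative to $(x_{i}^{(k)})_{i}$. Boundedness of the coordinate functionals (inherent to unconditional bases) together with the upper half $\|\sum b_{j}y_{j}\|\le\|b\|_{p}$ of the finite factorization yields a crude norm bound of the shape $\|T_{n}\|\le C\,n^{1/p}$, with $C$ depending only on the bases.

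Next, given $x_{k}=\sum_{i}a_{i}^{(k)}x_{i}^{(k)}\in X_{k}$, set $z_{j}^{(k)}:=a_{j}^{(k)}x_{j}^{(k)}$. Unconditionality gives $\|(z_{j}^{(k)})_{j}\|_{w,1}\le C_{k}\|x_{k}\|$, and by design $T_{n}(z_{j}^{(1)},\ldots,z_{j}^{(m)})=a_{j}^{(1)}\cdots a_{j}^{(m)}y_{j}$ for $j=1,\ldots,n$. Using the lower half $\|y_{j}\|\ge 1-\delta$ of the factorization and writing $b_{j}:=|a_{j}^{(1)}\cdots a_{j}^{(m)}|$, the summing inequality becomes
\[
\sum_{j=1}^{n}b_{j}^{q}\le M\,n^{q/p},
\]
with $M$ independent of $n$. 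The same argument works with any $n$-element subset of $\mathbb{N}$ in place of $\{1,\ldots,n\}$ (rearranging an unconditional basis only affects the constants in a controlled way), so one can replace $(b_{j})$ by its decreasing rearrangement $(b_{j}^{*})$ and obtain $n(b_{n}^{*})^{q}\le Mn^{q/p}$, i.e., $b_{n}^{*}\lesssim n^{-(p-q)/(pq)}$; summability of $(n^{-t(p-q)/(pq)})_{n}$ for $t>pq/(p-q)$ delivers part (a).

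For part (b) one must sharpen the bound on $\|T_{n}\|$, since the $n^{1/p}$ estimate is exactly what drives (a). My approach would be to insert independent random signs $\varepsilon_{j}\in\{\pm 1\}$, replacing $y_{j}$ by $\varepsilon_{j}y_{j}$ (which preserves the finite factorization with the same $\delta$), average via Khintchine--Kahane, and use the unconditionality of each $(x_{j}^{(k)})_{j}$ to absorb the signs into the $u_{k}$'s. Under the hypothesis $2q\le p$ this symmetrization is expected to upgrade the estimate to an $n$-independent bound $\sum_{j}b_{j}^{q}\lesssim 1$, giving $\eta\le q$. The principal obstacle is precisely this refined norm estimate for $T_{n}$: the easy uniform bound captures (a), but extracting (b) requires exploiting the unconditional basis structure of each $X_{k}$, the $\ell_{p}$-factorization in $Y$, and the multilinear structure all at once, in the spirit of \cite{pelcot, lp}.
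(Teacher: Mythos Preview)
Your argument for part~(a) is correct but follows a different route from the one the paper (and Pellegrino's original \cite{pelcot}) uses. You bound $\|T_{n}\|\le C\,n^{1/p}$ crudely, feed this into the summing inequality, and extract a weak-$\ell_{pq/(p-q)}$ estimate via rearrangement; since $\eta$ is an infimum this suffices for $\eta\le pq/(p-q)$. The paper instead introduces a \emph{weighted} test map $P(u_{1},\ldots,u_{m})=\sum_{j=1}^{n}|\mu_{j}|^{1/q}u_{1}(j)\cdots u_{m}(j)y_{j}$ with $\sum|\mu_{j}|^{p/q}=1$. The crucial gain is that H\"older's inequality then gives $\|P\|\le C$ \emph{uniformly in $n$}, and taking the supremum over $(\mu_{j})$ via duality yields a genuine quantitative $\ell_{pq/(p-q)}$ bound
\[
\Bigl(\sum_{j}\bigl|a_{j}^{(1)}\cdots a_{j}^{(m)}\bigr|^{pq/(p-q)}\Bigr)^{(p-q)/(pq)}\le A\prod_{k}\|x_{k}\|,
\]
not merely membership in $\ell_{t}$ for $t>pq/(p-q)$.

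This difference becomes decisive in part~(b), where your proposal has a genuine gap. The randomization you sketch does not lead anywhere: inserting signs $\varepsilon_{j}$ on the $y_{j}$'s leaves the $\ell_{p}$ upper bound $\|\sum b_{j}y_{j}\|\le\|b\|_{p}$ unchanged, and absorbing signs into the $u_{k}$'s via unconditionality just reproduces the same $n^{1/p}$ norm bound you already had. There is no Khintchine--Kahane mechanism here that turns the hypothesis $q\le p/2$ into an $n$-free estimate. The actual argument is a \emph{bootstrap}: one uses the quantitative $\ell_{pq/(p-q)}$ inequality displayed above to re-estimate the norm of the \emph{unweighted} map $S(u_{1},\ldots,u_{m})=\sum_{j=1}^{n}u_{1}(j)\cdots u_{m}(j)y_{j}$. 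Since $q\le p/2$ is equivalent to $pq/(p-q)\le p$, one has
\[
\|S(u_{1},\ldots,u_{m})\|\le C_{2}\Bigl(\sum_{j}|u_{1}(j)\cdots u_{m}(j)|^{p}\Bigr)^{1/p}\le C_{2}\Bigl(\sum_{j}|u_{1}(j)\cdots u_{m}(j)|^{pq/(p-q)}\Bigr)^{(p-q)/(pq)}\le C_{2}A\prod_{k}\|u_{k}\|,
\]
so $\|S\|$ is bounded independently of $n$, and plugging $S$ back into the summing inequality gives $\sum_{j}b_{j}^{q}<\infty$ directly. Your weak-type version of (a) does not supply the uniform constant $A$ needed for this step, so even if you abandoned randomization you would have to redo~(a) along the paper's lines before~(b) could go through.
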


Later, still dealing with the class of homogeneous polynomials ($m=1$ and
$n_{1}=m>1$), G. Botelho and D. Pellegrino \cite{bp} obtained better estimates
for $\eta$, improving Theorems \ref{thm5} and \ref{thm5poloni}, as we see below:

\begin{lemma}
[Botelho--Pellegrino \cite{bp}]\label{jmaa}Suppose that $Y$ satisfies the
following condition:

There exist $C_{1},C_{2}>0$ and $p\geq1$ such that for every $n\in\mathbb{N}$,
there are $y_{1},\ldots,y_{n}$ in $Y$ with $\left\Vert y_{j}\right\Vert \geq
C_{1}$ for every $j$ and%
\[
\left\Vert \underset{j=1}{\overset{n}{%
{\displaystyle\sum}
}}a_{j}y_{j}\right\Vert \leq C_{2}\left(  \underset{j=1}{\overset{n}{%
{\displaystyle\sum}
}}\left\vert a_{j}\right\vert ^{p}\right)  ^{\frac{1}{p}}%
\]
for every $a_{1},\ldots,a_{n}\in\mathbb{K}$.

In this case, if $X$ has a normalized unconditional Schauder basis $\left(
x_{n}\right)  _{n\in\mathbb{N}}$, $q<p$ and $\mathcal{P}_{\text{\textit{as}%
}\left(  q;1\right)  }\left(  ^{m}X;Y\right)  =\mathcal{P}\left(
^{m}X;Y\right)  $, then $\eta\leq q$.
\end{lemma}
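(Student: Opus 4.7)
My plan is to argue by contradiction in the Lindenstrauss--Pe\l czy\'nski style. Suppose $\eta>q$; then I can pick $r\in\left(  q,\eta\right)  $ and $x=\sum_{j=1}^{\infty}a_{j}x_{j}\in X$ with $\sum_{j=1}^{\infty}a_{j}^{mr}=\infty$, and by unconditionality of the basis I may assume $a_{j}\geq0$.

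The first step is an Open Mapping Theorem move. The continuous injection $\mathcal{P}_{\text{\textit{as}}\left(  q;1\right)  }\left(  ^{m}X;Y\right)  \hookrightarrow\mathcal{P}\left(  ^{m}X;Y\right)  $ is, under the coincidence hypothesis, a Banach-space bijection, yielding a constant $K>0$ such that
\[
\left(  \sum_{j}\left\Vert P\left(  w_{j}\right)  \right\Vert ^{q}\right)  ^{1/q}\leq K\left\Vert P\right\Vert \left\Vert \left(  w_{j}\right)  \right\Vert _{w,1}^{m}
\]
holds for every $P\in\mathcal{P}\left(  ^{m}X;Y\right)  $ and every $\left(  w_{j}\right)  \in\ell_{1}^{w}\left(  X\right)  $.

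Second, for each $N\in\mathbb{N}$ I would use the hypothesis on $Y$ to pick vectors $y_{1},\dots,y_{N}\in Y$ and define
\[
P_{N}\left(  w\right)  :=\sum_{j=1}^{N}\alpha_{j}\,x_{j}^{\ast}\left(  w\right)  ^{m}\,y_{j},
\]
where the $x_{j}^{\ast}$ are the biorthogonal functionals of the basis and the scalars $\left(  \alpha_{j}\right)  $ are free parameters. The $\ell_{p}$-upper bound of the hypothesis together with $\left\vert x_{j}^{\ast}\left(  w\right)  \right\vert \leq K_{u}\left\Vert w\right\Vert $ (where $K_{u}$ is the unconditional basis constant) gives $\left\Vert P_{N}\right\Vert \leq C_{2}K_{u}^{m}\left\Vert \alpha\right\Vert _{p}$.

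Third, I would test the summing inequality on $w_{i}=a_{i}x_{i}$. This sequence lies in $\ell_{1}^{w}\left(  X\right)  $ with $\left\Vert \left(  w_{i}\right)  \right\Vert _{w,1}\leq K_{u}\left\Vert x\right\Vert $ by unconditionality. Since $P_{N}\left(  a_{i}x_{i}\right)  =\alpha_{i}a_{i}^{m}y_{i}$ and $\left\Vert y_{i}\right\Vert \geq C_{1}$, one has $\left\Vert P_{N}\left(  w_{i}\right)  \right\Vert \geq C_{1}\left\vert \alpha_{i}\right\vert a_{i}^{m}$, and the summing inequality reduces to the master inequality
\[
\left(  \sum_{j=1}^{N}\left\vert \alpha_{j}\right\vert ^{q}a_{j}^{mq}\right)  ^{1/q}\leq D\left\Vert \alpha\right\Vert _{p},
\]
valid for every $N$ and every $\left(  \alpha_{j}\right)  $, with $D$ depending only on $K,C_{1},C_{2},K_{u},m$ and $\left\Vert x\right\Vert $.

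The fourth and hardest step, where the main obstacle lies, is to extract $\sum_{j}a_{j}^{mr}<\infty$ from the master inequality. Pure $\ell_{p}$--$\ell_{q}$ duality applied to the diagonal multiplier $\alpha\mapsto\left(  \alpha_{j}a_{j}^{m}\right)  $ only gives $\left(  a_{j}^{m}\right)  \in\ell_{pq/\left(  p-q\right)  }$, which merely reproduces Theorem \ref{thm5}(a) and stops short of the desired $\eta\leq q$; natural bootstraps — for instance plugging $\alpha_{j}=a_{j}^{\theta}$ into the master inequality and iterating on $\theta$ — stabilize at that same exponent $pq/\left(  p-q\right)  $. The real content of the Botelho--Pellegrino lemma is therefore a sharpening of the construction: one must either enrich the test sequence (by block sums or sign-symmetrizations of the $a_{j}x_{j}$'s) or exploit more fully the freedom to pick fresh $y_{1},\dots,y_{N}$ at every scale $N$, so as to produce a strictly stronger inequality than the master one, from which $\left(  a_{j}^{m}\right)  \in\ell_{q+\varepsilon}$ for every $\varepsilon>0$ can be read off directly, contradicting the choice of $r\in\left(  q,\eta\right)  $. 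The delicate point is that the lower bound $\left\Vert y_{j}\right\Vert \geq C_{1}$ is strictly diagonal information, so any non-diagonal refinement of $P_{N}$ or of the test sequence must be aligned with that limitation; reconciling the diagonal lower bounds with off-diagonal $\ell_{p}$ control is precisely where the argument becomes subtle and where $q<p$ (rather than the weaker $q\leq p/2$ of Pellegrino) is used crucially.
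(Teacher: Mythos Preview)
Your setup through the master inequality is exactly how the paper (in the special case $m=1$ of the proof of Lemma~\ref{jmaamultipol}) begins, and your diagnosis is correct: duality on
\[
\left(\sum_{j=1}^{N}\left\vert\alpha_{j}\right\vert^{q}a_{j}^{mq}\right)^{1/q}\leq D\left\Vert\alpha\right\Vert_{p}
\]
gives only $(a_{j}^{m})\in\ell_{pq/(p-q)}$, and iterating $\alpha_{j}=a_{j}^{m\theta}$ inside this \emph{fixed} inequality does stabilise there. But your fourth paragraph is speculation pointed in the wrong direction. The paper's argument stays entirely diagonal: no block sums, no sign--symmetrisation of the test sequence, no off-diagonal refinement of $P_{N}$.

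The idea you are missing is that the first pass yields not merely information about your one fixed $x$, but a \emph{uniform} bound
\[
\left(\sum_{j}\left\vert b_{j}\right\vert^{m\cdot\frac{pq}{p-q}}\right)^{\frac{p-q}{pq}}\leq A_{1}\left\Vert w\right\Vert^{m}\qquad\text{for every }w=\sum_{j}b_{j}x_{j}\in X,
\]
with $A_{1}$ independent of $w$. The bootstrap feeds this back not into the choice of $\alpha$ but into your step~4, the estimate for $\left\Vert P_{N}\right\Vert$: instead of the crude $\left\vert b_{j}\right\vert\leq K_{u}\left\Vert w\right\Vert$, bound $\left\Vert P_{N}(w)\right\Vert\leq C_{2}\bigl(\sum_{j}\left\vert\alpha_{j}b_{j}^{m}\right\vert^{p}\bigr)^{1/p}$ by H\"{o}lder against the $\ell_{pq/(p-q)}$-norm of $(b_{j}^{m})$. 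This gives $\left\Vert P_{N}\right\Vert\leq C_{2}A_{1}\left\Vert\alpha\right\Vert_{t_{1}}$ with $\frac{1}{p}=\frac{1}{t_{1}}+\frac{p-q}{pq}$, hence a \emph{new} master inequality with $\left\Vert\alpha\right\Vert_{t_{1}}$ on the right instead of $\left\Vert\alpha\right\Vert_{p}$, and duality now yields $(b_{j}^{m})\in\ell_{pq/(2(p-q))}$ uniformly. Iterating, after $j$ rounds one has $\eta\leq\frac{pq}{j(p-q)}$ whenever $q>\frac{jp}{j+1}$, while once $q\leq\frac{jp}{j+1}$ the unweighted polynomial $S_{N}(w)=\sum_{j\leq N}b_{j}^{m}y_{j}$ already has $\left\Vert S_{N}\right\Vert$ bounded independently of $N$ and the summing inequality gives $\eta\leq q$ directly. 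Since $\frac{jp}{j+1}\uparrow p$ and $q<p$, finitely many rounds suffice. So your contradiction framing with a single frozen $x$ is itself part of the obstruction: the argument must be run directly, tracking the universal coordinate bound at every stage so it can be recycled into the next norm estimate for $P_{N}$.
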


In this paper we will extend Lemma \ref{jmaa} to multipolynomials which will
provide a unified approach to Problem \ref{prob}. Indeed, next lemma recovers
all the aforementioned results as particular extreme cases, that is, one can
set $m=1$ to obtain the polynomial case (Lemma \ref{jmaa}) or $n_{1}%
=\cdots=n_{m}=1$ to yield the multilinear version of Lemma \ref{jmaa} (also an
up to now non-proved result) thus improving Theorem \ref{thm8}. Besides that
unifying contribution, the same result lifts to strict multipolynomials.

\begin{lemma}
\label{jmaamultipol}Suppose that $Y$ satisfies the following condition:

There exist $C_{1},C_{2}>0$ and $p\geq1$ such that for every $n\in\mathbb{N}$,
there are $y_{1},\ldots,y_{n}$ in $Y$ with $\left\Vert y_{j}\right\Vert \geq
C_{1}$ for every $j$ and%
\[
\left\Vert \underset{j=1}{\overset{n}{%
{\displaystyle\sum}
}}a_{j}y_{j}\right\Vert \leq C_{2}\left(  \underset{j=1}{\overset{n}{%
{\displaystyle\sum}
}}\left\vert a_{j}\right\vert ^{p}\right)  ^{\frac{1}{p}}%
\]
for every $a_{1},\ldots,a_{n}\in\mathbb{K}$.

In this case, if $X_{k}$ has a normalized unconditional Schauder basis
$\left(  x_{n}^{\left(  k\right)  }\right)  _{n\in\mathbb{N}}$, for each
$k=1,\ldots,m$, $q<p$ and $\mathcal{P}_{\text{\textit{as}}\left(  q;1\right)
}\left(  ^{n_{1}}X_{1},\ldots,^{n_{m}}X_{m};Y\right)  =\mathcal{P}\left(
^{n_{1}}X_{1},\ldots,^{n_{m}}X_{m};Y\right)  $, then $\eta\leq q$.
\end{lemma}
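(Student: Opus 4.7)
The plan is to adapt the Lindenstrauss--Pe\l czy\'nski scheme used by Botelho--Pellegrino for polynomials (Lemma \ref{jmaa}) to the multipolynomial setting. Fix $x_k = \sum_j a_j^{(k)} x_j^{(k)} \in X_k$, $k=1,\ldots,m$; by unconditionality of each basis we may assume $a_j^{(k)} \geq 0$, and the goal reduces to showing that $A_j := \prod_{k=1}^{m} (a_j^{(k)})^{n_k}$ belongs to $\ell_q$. The coincidence hypothesis $\mathcal{P}_{\mathit{as}(q;1)} = \mathcal{P}$, combined with the Banach--Steinhaus theorem for multipolynomials (alluded to in the text after Definition \ref{defabsmultilin}), produces a constant $K>0$ such that every multipolynomial $P$ satisfies
\[
\Bigl(\sum_i \|P(z_i^{(1)},\ldots,z_i^{(m)})\|^q\Bigr)^{1/q} \leq K\,\|P\|\prod_{k=1}^{m} \|(z_i^{(k)})_i\|_{w,1}^{n_k}
\]
for every $(z_i^{(k)})_i \in \ell_1^w(X_k)$.

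For each $n \in \mathbb{N}$ let $y_1,\ldots,y_n$ be the vectors furnished by the $Y$-hypothesis and let $(f_j^{(k)})_j$ be the coordinate functionals of the basis of $X_k$. Consider the multipolynomial $P_n(w_1,\ldots,w_m) := \sum_{j=1}^n \prod_{k=1}^{m} f_j^{(k)}(w_k)^{n_k}\, y_j$ and the test sequences $z_i^{(k)} := a_i^{(k)} x_i^{(k)}$. By unconditionality $\|(z_i^{(k)})_i\|_{w,1} \leq K_u \|x_k\|$ (with $K_u$ the unconditional basis constant), and the evaluation collapses to $P_n(z_i^{(1)},\ldots,z_i^{(m)}) = A_i\, y_i$ for $i \leq n$, so $\|P_n(z_i^{(1)},\ldots,z_i^{(m)})\| \geq C_1 A_i$. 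Plugging into the summing inequality yields
\[
C_1 \Bigl(\sum_{i=1}^n A_i^q\Bigr)^{1/q} \leq K\,\|P_n\|\prod_{k=1}^{m} (K_u \|x_k\|)^{n_k},
\]
and letting $n \to \infty$ will deliver $(A_j) \in \ell_q$ once $\|P_n\|$ is under uniform control.

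The main obstacle is precisely this uniform bound on $\|P_n\|$. The $Y$-hypothesis gives $\|P_n(w_1,\ldots,w_m)\| \leq C_2 \bigl(\sum_{j=1}^n \prod_{k=1}^{m} |f_j^{(k)}(w_k)|^{n_k p}\bigr)^{1/p}$, and the crude estimate $|f_j^{(k)}(w_k)| \leq K_u$ produces only $\|P_n\| = O(n^{1/p})$, which after substitution recovers merely the weaker conclusion $\eta \leq pq/(p-q)$ of Theorems \ref{thm5} and \ref{thm8}. Obtaining the sharper $\eta \leq q$ demands an argument that genuinely exploits the strict inequality $q<p$: a natural route is to replace $P_n$ by a weighted variant $P_n^{(\alpha)}(w) := \sum_{j=1}^n A_j^{\alpha} \prod_k f_j^{(k)}(w_k)^{n_k}\, y_j$ with an exponent $\alpha>0$ to be optimized, derive from the summing inequality a weighted $\ell_q$-vs-$\ell_p$ inequality for $(A_j)$, and then use H\"{o}lder duality (or an iteration argument driven by $q<p$) to collapse the bound into $\|A\|_q < \infty$. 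Verifying rigorously that this scheme delivers $\eta \leq q$, rather than only $\eta \leq pq/(p-q)$, and that the argument is compatible with the mixed multipolynomial multi-degree $(n_1,\ldots,n_m)$, is the heart of the proof and the place where the idea of Botelho--Pellegrino must be transported faithfully.
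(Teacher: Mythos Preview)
Your outline aligns with the paper's approach --- build test multipolynomials from the coordinate functionals and the $y_j$, feed the sequences $a_i^{(k)}x_i^{(k)}$ into the $(q;1)$-summing inequality, and exploit $q<p$ through weights --- but it stops exactly where the substance begins. You correctly observe that the unweighted $P_n$ yields only $\eta\le pq/(p-q)$ and that a weighted variant is needed, and then defer the passage from $pq/(p-q)$ to $q$ to ``H\"older duality (or an iteration argument)'' without executing it. That passage \emph{is} the proof; a single optimized choice of your weight $A_j^{\alpha}$ (the natural one is $\alpha=q/(p-q)$, which makes $(\alpha+1)q=\alpha p$) still only recovers $\eta\le pq/(p-q)$, so no one-shot optimization suffices.

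The paper's mechanism is an explicit induction on an integer $j$. Its weights are free scalars $|\mu_i|^{1/q}$ normalized by $\sum_i|\mu_i|^{s}=1$ with $s=p/q$; under this normalization the crude bound $|a_i^{(k)}|\le\varrho_k\|x_k\|$ already makes $\|P\|$ bounded uniformly in $n$, and taking the supremum over admissible $(\mu_i)$ is precisely $\ell_s$--$\ell_{s'}$ duality, converting the summing inequality into $(A_i)\in\ell_{pq/(p-q)}$ with a constant controlled by $\prod_k\|x_k\|^{n_k}$. This is the base step $j=1$. The bootstrap is then: at stage $j$ one has $(A_i)\in\ell_{l_j}$ with $l_j=pq/(j(p-q))$; H\"older with exponents $(t_j,l_j)$ satisfying $1/t_j+1/l_j=1/p$ replaces the crude coordinate bound in the estimate of $\|P\|$, and the duality step, now run with $s_j=t_j/q=p/((j+1)q-jp)$, yields $(A_i)\in\ell_{l_{j+1}}$. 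Once $l_{j+1}\le p$ --- equivalently $q\le (j+1)p/(j+2)$, which occurs at some finite $j$ because $jp/(j+1)\uparrow p$ and $q<p$ --- the \emph{unweighted} multipolynomial $S$ has uniformly bounded norm and the summing inequality delivers $(A_i)\in\ell_q$ directly. The paper tracks explicit constants $A_j,B_j$ through this recursion; that bookkeeping, not the setup you wrote, is what your proposal is missing.
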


\begin{proof}
The proof follows the ideas of the original proof in \cite{bp}; it is done by
an induction argument. By the coincidence hypothesis, there exists $K>0$ such
that the absolutely summing multipolynomial norm $\pi_{\text{\textit{as}%
}\left(  q;1\right)  }^{\left(  n_{1},\ldots,n_{m}\right)  }\left(  P\right)
\leq K\left\Vert P\right\Vert $ for all $P\in$ $\mathcal{P}\left(  ^{n_{1}%
}X_{1},\ldots,^{n_{m}}X_{m};Y\right)  $. Let $n$ be a fixed natural number and
$\left\{  \mu_{j}\right\}  _{j=1}^{n}$ be such that $\sum_{j=1}^{n}\left\vert
\mu_{j}\right\vert ^{s}=1$ with $s=p/q$. Define $P:X_{1}\times\cdots\times
X_{m}\rightarrow Y$ by%
\[
P\left(  x_{1},\ldots,x_{m}\right)  =\underset{j=1}{\overset{n}{%
{\displaystyle\sum}
}}\left\vert \mu_{j}\right\vert ^{\frac{1}{q}}\left(  a_{j}^{(1)}\right)
^{n_{1}}\cdots\left(  a_{j}^{(m)}\right)  ^{n_{m}}y_{j}\text{,}%
\]
where $x_{k}=\sum_{j=1}^{\infty}a_{j}^{(k)}x_{j}^{\left(  k\right)  }$, for
$k=1,\ldots,m$. Since $\left(  x_{n}^{\left(  k\right)  }\right)
_{n\in\mathbb{N}}$ is an unconditional basis, there exist $\varrho_{k}>0$
satisfying%
\[
\left\Vert \overset{\infty}{\underset{j=1}{%
{\displaystyle\sum}
}}\varepsilon_{j}a_{j}^{(k)}x_{j}^{\left(  k\right)  }\right\Vert \leq
\varrho_{k}\left\Vert \underset{j=1}{\overset{\infty}{%
{\displaystyle\sum}
}}a_{j}^{(k)}x_{j}^{\left(  k\right)  }\right\Vert =\varrho_{k}\left\Vert
x_{k}\right\Vert \text{, for any }\varepsilon_{j}=\pm1\text{.}%
\]
Hence $\left\Vert \sum_{j=1}^{n}\varepsilon_{j}a_{j}^{(k)}x_{j}^{\left(
k\right)  }\right\Vert \leq\varrho_{k}\left\Vert x_{k}\right\Vert $ for all
$n$, any $\varepsilon_{j}=\pm1$ and all $k=1,\ldots,m$. So, if $x_{k}%
=\sum_{j=1}^{\infty}a_{j}^{(k)}x_{j}^{\left(  k\right)  }$, we have
$\left\vert a_{j}^{(k)}\right\vert \leq\varrho_{k}\left\Vert x_{k}\right\Vert
$ for all $j$, all $k=1,\ldots,m$ and then we get%
\begin{align*}
\left\Vert P\left(  x_{1},\ldots,x_{m}\right)  \right\Vert  &  =\left\Vert
\underset{j=1}{\overset{n}{%
{\displaystyle\sum}
}}\left\vert \mu_{j}\right\vert ^{\frac{1}{q}}\left(  a_{j}^{(1)}\right)
^{n_{1}}\cdots\left(  a_{j}^{(m)}\right)  ^{n_{m}}y_{j}\right\Vert \\
&  \leq C_{2}\left(  \underset{j=1}{\overset{n}{%
{\displaystyle\sum}
}}\left\vert \left\vert \mu_{j}\right\vert ^{\frac{1}{q}}\left(  a_{j}%
^{(1)}\right)  ^{n_{1}}\cdots\left(  a_{j}^{(m)}\right)  ^{n_{m}}\right\vert
^{p}\right)  ^{\frac{1}{p}}\\
&  \leq C_{2}\overset{m}{\underset{k=1}{%
{\displaystyle\prod}
}}\left(  \varrho_{k}\left\Vert x_{k}\right\Vert \right)  ^{n_{k}}\left(
\underset{j=1}{\overset{n}{%
{\displaystyle\sum}
}}\left\vert \mu_{j}\right\vert ^{s}\right)  ^{\frac{1}{p}}\\
&  =C_{2}\overset{m}{\underset{k=1}{%
{\displaystyle\prod}
}}\left(  \varrho_{k}\left\Vert x_{k}\right\Vert \right)  ^{n_{k}}\text{.}%
\end{align*}
We obtain $\left\Vert P\right\Vert \leq C_{2}%
{\textstyle\prod_{k=1}^{m}}
\varrho_{k}^{n_{k}}$ and $\pi_{\text{\textit{as}}\left(  q;1\right)
}^{\left(  n_{1},\ldots,n_{m}\right)  }\left(  P\right)  \leq KC_{2}%
{\textstyle\prod_{k=1}^{m}}
\varrho_{k}^{n_{k}}$ and achieve the estimate below:%
\begin{align}
\left(  \underset{j=1}{\overset{n}{%
{\displaystyle\sum}
}}\left\vert \left(  a_{j}^{(1)}\right)  ^{n_{1}}\cdots\left(  a_{j}%
^{(m)}\right)  ^{n_{m}}\left\vert \mu_{j}\right\vert ^{\frac{1}{q}}%
C_{1}\right\vert ^{q}\right)  ^{\frac{1}{q}} &  \leq\left(  \underset
{j=1}{\overset{n}{%
{\displaystyle\sum}
}}\left\Vert \left(  a_{j}^{(1)}\right)  ^{n_{1}}\cdots\left(  a_{j}%
^{(m)}\right)  ^{n_{m}}\left\vert \mu_{j}\right\vert ^{\frac{1}{q}}%
y_{j}\right\Vert ^{q}\right)  ^{\frac{1}{q}}\nonumber\\
&  =\left(  \underset{j=1}{\overset{n}{%
{\displaystyle\sum}
}}\left\Vert P\left(  a_{j}^{(1)}x_{j}^{\left(  1\right)  },\ldots,a_{j}%
^{(m)}x_{j}^{\left(  m\right)  }\right)  \right\Vert ^{q}\right)  ^{\frac
{1}{q}}\nonumber\\
&  \leq\pi_{\text{\textit{as}}\left(  q;1\right)  }^{\left(  n_{1}%
,\ldots,n_{m}\right)  }\left(  P\right)  \overset{m}{\underset{k=1}{%
{\displaystyle\prod}
}}\left\Vert \left(  a_{j}^{(k)}x_{j}^{\left(  k\right)  }\right)  _{j=1}%
^{n}\right\Vert _{w,1}^{n_{k}}\nonumber\\
&  \leq KC_{2}\overset{m}{\underset{k=1}{%
{\displaystyle\prod}
}}\varrho_{k}^{n_{k}}2^{n_{k}}\underset{\varepsilon_{j}\in\left\{
1,-1\right\}  }{\max}\left\{  \left\Vert \overset{n}{\underset{j=1}{%
{\textstyle\sum}
}}\varepsilon_{j}a_{j}^{(k)}x_{j}^{\left(  k\right)  }\right\Vert \right\}
^{n_{k}}\nonumber\\
&  \leq KC_{2}\overset{m}{\underset{k=1}{%
{\displaystyle\prod}
}}\left(  2\varrho_{k}^{2}\left\Vert x_{k}\right\Vert \right)  ^{n_{k}%
}\text{.}\label{jmaa1}%
\end{align}
Note that the last inequality holds whenever $\sum_{j=1}^{n}\left\vert \mu
_{j}\right\vert ^{s}=1$. Hence, since $1/s+1/\left(  \frac{s}{s-1}\right)
=1$, we have%
\begin{align}
&  \left(  \underset{j=1}{\overset{n}{%
{\displaystyle\sum}
}}\left\vert \left(  a_{j}^{(1)}\right)  ^{n_{1}}\cdots\left(  a_{j}%
^{(m)}\right)  ^{n_{m}}\right\vert ^{\frac{s}{s-1}q}\right)  ^{1/\left(
\frac{s}{s-1}\right)  }\nonumber\\
&  =\sup\left\{  \left\vert \underset{j=1}{\overset{n}{%
{\displaystyle\sum}
}}\mu_{j}\left(  a_{j}^{(1)}\right)  ^{n_{1}q}\cdots\left(  a_{j}%
^{(m)}\right)  ^{n_{m}q}\right\vert ;\underset{j=1}{\overset{n}{%
{\displaystyle\sum}
}}\left\vert \mu_{j}\right\vert ^{s}=1\right\}  \nonumber\\
&  \leq\sup\left\{  \underset{j=1}{\overset{n}{%
{\displaystyle\sum}
}}\left\vert \left(  a_{j}^{(1)}\right)  ^{n_{1}}\cdots\left(  a_{j}%
^{(m)}\right)  ^{n_{m}}\left\vert \mu_{j}\right\vert ^{\frac{1}{q}}\right\vert
^{q};\underset{j=1}{\overset{n}{%
{\displaystyle\sum}
}}\left\vert \mu_{j}\right\vert ^{s}=1\right\}  \text{.}\label{jmaa2}%
\end{align}
Then, by (\ref{jmaa1}) and (\ref{jmaa2}), it follows that%
\[
\left(  \underset{j=1}{\overset{n}{%
{\displaystyle\sum}
}}\left\vert \left(  a_{j}^{(1)}\right)  ^{n_{1}}\cdots\left(  a_{j}%
^{(m)}\right)  ^{n_{m}}\right\vert ^{\frac{s}{s-1}q}\right)  ^{1/\left(
\frac{s}{s-1}\right)  }\leq\left(  C_{1}^{-1}KC_{2}\overset{m}{\underset{k=1}{%
{\displaystyle\prod}
}}\left(  2\varrho_{k}^{2}\left\Vert x_{k}\right\Vert \right)  ^{n_{k}%
}\right)  ^{q}\text{,}%
\]
and then%
\[
\left(  \underset{j=1}{\overset{n}{%
{\displaystyle\sum}
}}\left\vert \left(  a_{j}^{(1)}\right)  ^{n_{1}}\cdots\left(  a_{j}%
^{(m)}\right)  ^{n_{m}}\right\vert ^{\frac{s}{s-1}q}\right)  ^{1/\left(
\frac{s}{s-1}\right)  q}\leq C_{1}^{-1}KC_{2}\overset{m}{\underset{k=1}{%
{\displaystyle\prod}
}}\left(  2\varrho_{k}^{2}\right)  ^{n_{k}}\overset{m}{\underset{k=1}{%
{\displaystyle\prod}
}}\left\Vert x_{k}\right\Vert ^{n_{k}}\text{.}%
\]
Since $\frac{s}{s-1}q=pq/\left(  p-q\right)  $ and $n$ is arbitrary, we have
$\eta\leq pq/\left(  p-q\right)  $. Now, if $q\leq p/2$, define, for a fixed
$n$, $S:X_{1}\times\cdots\times X_{m}\rightarrow Y$ by%
\[
S\left(  x_{1},\ldots,x_{m}\right)  =\underset{j=1}{\overset{n}{%
{\displaystyle\sum}
}}\left(  a_{j}^{(1)}\right)  ^{n_{1}}\cdots\left(  a_{j}^{(m)}\right)
^{n_{m}}y_{j}\text{, where }x_{k}=\overset{\infty}{\underset{j=1}{%
{\displaystyle\sum}
}}a_{j}^{(k)}x_{j}^{\left(  k\right)  }\text{, for }k=1,\ldots,m\text{.}%
\]
Since $p\geq\frac{s}{s-1}q$, combining the preceding estimates, we obtain%
\begin{align*}
\left\Vert S\left(  x_{1},\ldots,x_{m}\right)  \right\Vert  &  =\left\Vert
\underset{j=1}{\overset{n}{%
{\displaystyle\sum}
}}\left(  a_{j}^{(1)}\right)  ^{n_{1}}\cdots\left(  a_{j}^{(m)}\right)
^{n_{m}}y_{j}\right\Vert \\
&  \leq C_{2}\left(  \underset{j=1}{\overset{n}{%
{\displaystyle\sum}
}}\left\vert \left(  a_{j}^{(1)}\right)  ^{n_{1}}\cdots\left(  a_{j}%
^{(m)}\right)  ^{n_{m}}\right\vert ^{p}\right)  ^{\frac{1}{p}}\\
&  \leq C_{2}\left(  \underset{j=1}{\overset{n}{%
{\displaystyle\sum}
}}\left\vert \left(  a_{j}^{(1)}\right)  ^{n_{1}}\cdots\left(  a_{j}%
^{(m)}\right)  ^{n_{m}}\right\vert ^{\frac{s}{s-1}q}\right)  ^{1/\left(
\frac{s}{s-1}\right)  q}\\
&  \leq C_{1}^{-1}KC_{2}^{2}\overset{m}{\underset{k=1}{%
{\displaystyle\prod}
}}\left(  2\varrho_{k}^{2}\right)  ^{n_{k}}\overset{m}{\underset{k=1}{%
{\displaystyle\prod}
}}\left\Vert x_{k}\right\Vert ^{n_{k}}\text{.}%
\end{align*}
Thus, $\left\Vert S\right\Vert \leq C_{1}^{-1}KC_{2}^{2}%
{\textstyle\prod_{k=1}^{m}}
\left(  2\varrho_{k}^{2}\right)  ^{n_{k}}$ and $\pi_{\text{\textit{as}}\left(
q;1\right)  }^{\left(  n_{1},\ldots,n_{m}\right)  }\left(  S\right)  \leq
C_{1}^{-1}K^{2}C_{2}^{2}%
{\textstyle\prod_{k=1}^{m}}
\left(  2\varrho_{k}^{2}\right)  ^{n_{k}}$. Hence%
\begin{align*}
\underset{j=1}{\overset{n}{%
{\displaystyle\sum}
}}\left\vert \left(  a_{j}^{(1)}\right)  ^{n_{1}}\cdots\left(  a_{j}%
^{(m)}\right)  ^{n_{m}}C_{1}\right\vert ^{q} &  \leq\underset{j=1}{\overset
{n}{%
{\displaystyle\sum}
}}\left\Vert \left(  a_{j}^{(1)}\right)  ^{n_{1}}\cdots\left(  a_{j}%
^{(m)}\right)  ^{n_{m}}y_{j}\right\Vert ^{q}\\
&  =\underset{j=1}{\overset{n}{%
{\displaystyle\sum}
}}\left\Vert S\left(  a_{j}^{(1)}x_{j}^{\left(  1\right)  },\ldots,a_{j}%
^{(m)}x_{j}^{\left(  m\right)  }\right)  \right\Vert ^{q}\\
&  \leq\left(  \pi_{\text{\textit{as}}\left(  q;1\right)  }^{\left(
n_{1},\ldots,n_{m}\right)  }\left(  S\right)  \overset{m}{\underset{k=1}{%
{\displaystyle\prod}
}}2^{n_{k}}\underset{\varepsilon_{j}\in\left\{  1,-1\right\}  }{\max}\left\{
\left\Vert \overset{n}{\underset{j=1}{%
{\textstyle\sum}
}}\varepsilon_{j}a_{j}^{(k)}x_{j}^{\left(  k\right)  }\right\Vert \right\}
^{n_{k}}\right)  ^{q}\\
&  \leq\left(  C_{1}^{-1}K^{2}C_{2}^{2}\overset{m}{\underset{k=1}{%
{\displaystyle\prod}
}}\left(  4\varrho_{k}^{3}\right)  ^{n_{k}}\right)  ^{q}\overset{m}%
{\underset{k=1}{%
{\displaystyle\prod}
}}\left\Vert x_{k}\right\Vert ^{n_{k}q}\text{.}%
\end{align*}
Consequently, since $n$ is arbitrary, we have $%
{\textstyle\sum_{j=1}^{\infty}}
\left\vert \left(  a_{j}^{(1)}\right)  ^{n_{1}}\cdots\left(  a_{j}%
^{(m)}\right)  ^{n_{m}}\right\vert ^{q}<\infty$ whenever $x_{k}=%
{\textstyle\sum_{j=1}^{\infty}}
a_{j}^{(k)}x_{j}^{\left(  k\right)  }\in X_{k}$, for $k=1,\ldots,m$, and
$\eta\leq q$ if $q\leq p/2$.

Now we state the induction hypothesis:

Suppose that we have

\begin{description}
\item[(i)] $\eta\leq\frac{pq}{jp-jq}$ and $\left(
{\textstyle\sum_{i=1}^{\infty}}
\left\vert \left(  a_{i}^{(1)}\right)  ^{n_{1}}\cdots\left(  a_{i}%
^{(m)}\right)  ^{n_{m}}\right\vert ^{\frac{pq}{jp-jq}}\right)  ^{1/\left(
\frac{pq}{jp-jq}\right)  }\leq A_{j}%
{\textstyle\prod_{k=1}^{m}}
\left\Vert x_{k}\right\Vert ^{n_{k}}$, if $\frac{jp}{j+1}<q<p$,

\item[(ii)] $\eta\leq q$ and $\left(
{\textstyle\sum_{i=1}^{\infty}}
\left\vert \left(  a_{i}^{(1)}\right)  ^{n_{1}}\cdots\left(  a_{i}%
^{(m)}\right)  ^{n_{m}}\right\vert ^{q}\right)  ^{1/q}\leq B_{j}%
{\textstyle\prod_{k=1}^{m}}
\left\Vert x_{k}\right\Vert ^{n_{k}}$, if $q\leq\frac{jp}{j+1}$,
\end{description}

\noindent where

\begin{itemize}
\item $A_{1}=C_{1}^{-1}KC_{2}%
{\textstyle\prod_{k=1}^{m}}
\left(  2\varrho_{k}^{2}\right)  ^{n_{k}}$,

\item $B_{1}=C_{1}^{-2}K^{2}C_{2}^{2}%
{\textstyle\prod_{k=1}^{m}}
\left(  4\varrho_{k}^{3}\right)  ^{n_{k}}$,

\item $A_{j}=C_{1}^{-1}KC_{2}A_{j-1}%
{\textstyle\prod_{k=1}^{m}}
\left(  2\varrho_{k}\right)  ^{n_{k}}$ for $j\geq2$,

\item $B_{j}=C_{1}^{-2}K^{2}C_{2}^{2}A_{j-1}%
{\textstyle\prod_{k=1}^{m}}
\left(  4\varrho_{k}^{2}\right)  ^{n_{k}}$ for $j\geq2$.
\end{itemize}

Note that the case $j=1$ is done. We assume that (i) and (ii) hold for $j$ and
prove that they hold for $j+1$. To prove (i), assume $\frac{\left(
j+1\right)  p}{j+2}<q<p$.

Fix $n$ and let $\left\{  \mu_{i}\right\}  _{i=1}^{n}$ be such that $%
{\textstyle\sum_{i=1}^{n}}
\left\vert \mu_{i}\right\vert ^{s_{j}}=1$, where $s_{j}=\frac{p}{\left(
j+1\right)  q-jp}$. Defining $P$ as at the beginning and putting $l_{j}%
=\frac{pq}{jp-jq}$ and $t_{j}=\frac{pq}{\left(  j+1\right)  q-jp}$, we have
$\frac{1}{t_{j}}+\frac{1}{l_{j}}=\frac{1}{p}$ and so%
\begin{align*}
\left\Vert P\left(  x_{1},\ldots,x_{m}\right)  \right\Vert  &  =\left\Vert
\underset{i=1}{\overset{n}{%
{\displaystyle\sum}
}}\left\vert \mu_{i}\right\vert ^{\frac{1}{q}}\left(  a_{i}^{(1)}\right)
^{n_{1}}\cdots\left(  a_{i}^{(m)}\right)  ^{n_{m}}y_{i}\right\Vert \\
&  \leq C_{2}\left(  \underset{i=1}{\overset{n}{%
{\displaystyle\sum}
}}\left\vert \left\vert \mu_{i}\right\vert ^{\frac{1}{q}}\left(  a_{i}%
^{(1)}\right)  ^{n_{1}}\cdots\left(  a_{i}^{(m)}\right)  ^{n_{m}}\right\vert
^{p}\right)  ^{\frac{1}{p}}\\
&  \leq C_{2}\left(  \underset{i=1}{\overset{n}{%
{\displaystyle\sum}
}}\left\vert \left\vert \mu_{i}\right\vert ^{\frac{1}{q}}\right\vert ^{t_{j}%
}\right)  ^{\frac{1}{t_{j}}}\left(  \underset{i=1}{\overset{n}{%
{\displaystyle\sum}
}}\left\vert \left(  a_{i}^{(1)}\right)  ^{n_{1}}\cdots\left(  a_{i}%
^{(m)}\right)  ^{n_{m}}\right\vert ^{l_{j}}\right)  ^{\frac{1}{l_{j}}}\\
&  \leq C_{2}\left(  \underset{i=1}{\overset{n}{%
{\displaystyle\sum}
}}\left\vert \mu_{i}\right\vert ^{s_{j}}\right)  ^{\frac{1}{t_{j}}}\left(
\underset{i=1}{\overset{n}{%
{\displaystyle\sum}
}}\left\vert \left(  a_{i}^{(1)}\right)  ^{n_{1}}\cdots\left(  a_{i}%
^{(m)}\right)  ^{n_{m}}\right\vert ^{l_{j}}\right)  ^{\frac{1}{l_{j}}}\\
&  \leq C_{2}A_{j}\overset{m}{\underset{k=1}{%
{\displaystyle\prod}
}}\left\Vert x_{k}\right\Vert ^{n_{k}}\text{.}%
\end{align*}
We obtain $\left\Vert P\right\Vert \leq C_{2}A_{j}$ and $\pi
_{\text{\textit{as}}\left(  q;1\right)  }^{\left(  n_{1},\ldots,n_{m}\right)
}\left(  P\right)  \leq KC_{2}A_{j}$ and achieve the estimate below:%
\begin{align}
\left(  \underset{i=1}{\overset{n}{%
{\displaystyle\sum}
}}\left\vert \left(  a_{i}^{(1)}\right)  ^{n_{1}}\cdots\left(  a_{i}%
^{(m)}\right)  ^{n_{m}}\left\vert \mu_{i}\right\vert ^{\frac{1}{q}}%
C_{1}\right\vert ^{q}\right)  ^{\frac{1}{q}} &  \leq\left(  \underset
{i=1}{\overset{n}{%
{\displaystyle\sum}
}}\left\Vert \left(  a_{i}^{(1)}\right)  ^{n_{1}}\cdots\left(  a_{i}%
^{(m)}\right)  ^{n_{m}}\left\vert \mu_{i}\right\vert ^{\frac{1}{q}}%
y_{i}\right\Vert ^{q}\right)  ^{\frac{1}{q}}\nonumber\\
&  =\left(  \underset{i=1}{\overset{n}{%
{\displaystyle\sum}
}}\left\Vert P\left(  a_{i}^{(1)}x_{i}^{\left(  1\right)  },\ldots,a_{i}%
^{(m)}x_{i}^{\left(  m\right)  }\right)  \right\Vert ^{q}\right)  ^{\frac
{1}{q}}\nonumber\\
&  \leq\pi_{\text{\textit{as}}\left(  q;1\right)  }^{\left(  n_{1}%
,\ldots,n_{m}\right)  }\left(  P\right)  \overset{m}{\underset{k=1}{%
{\displaystyle\prod}
}}\left\Vert \left(  a_{i}^{(k)}x_{i}^{\left(  k\right)  }\right)  _{i=1}%
^{n}\right\Vert _{w,1}^{n_{k}}\nonumber\\
&  \leq KC_{2}A_{j}\overset{m}{\underset{k=1}{%
{\displaystyle\prod}
}}2^{n_{k}}\underset{\varepsilon_{i}\in\left\{  1,-1\right\}  }{\max}\left\{
\left\Vert \overset{n}{\underset{i=1}{%
{\textstyle\sum}
}}\varepsilon_{i}a_{i}^{(k)}x_{i}^{\left(  k\right)  }\right\Vert \right\}
^{n_{k}}\nonumber\\
&  \leq KC_{2}A_{j}\overset{m}{\underset{k=1}{%
{\displaystyle\prod}
}}\left(  2\varrho_{k}\left\Vert x_{k}\right\Vert \right)  ^{n_{k}}%
\text{.}\label{jmaa3}%
\end{align}
Since $\frac{1}{s_{j}}+1/\left(  \frac{s_{j}}{s_{j}-1}\right)  =1$, we have%
\begin{align}
&  \left(  \underset{i=1}{\overset{n}{%
{\displaystyle\sum}
}}\left\vert \left(  a_{i}^{(1)}\right)  ^{n_{1}}\cdots\left(  a_{i}%
^{(m)}\right)  ^{n_{m}}\right\vert ^{\frac{s_{j}}{s_{j}-1}q}\right)
^{1/\left(  \frac{s_{j}}{s_{j}-1}\right)  }\nonumber\\
&  =\sup\left\{  \left\vert \underset{i=1}{\overset{n}{%
{\displaystyle\sum}
}}\mu_{i}\left(  a_{i}^{(1)}\right)  ^{n_{1}q}\cdots\left(  a_{i}%
^{(m)}\right)  ^{n_{m}q}\right\vert ;\underset{i=1}{\overset{n}{%
{\displaystyle\sum}
}}\left\vert \mu_{i}\right\vert ^{s_{j}}=1\right\}  \nonumber\\
&  \leq\sup\left\{  \underset{i=1}{\overset{n}{%
{\displaystyle\sum}
}}\left\vert \left(  a_{i}^{(1)}\right)  ^{n_{1}}\cdots\left(  a_{i}%
^{(m)}\right)  ^{n_{m}}\left\vert \mu_{i}\right\vert ^{\frac{1}{q}}\right\vert
^{q};\underset{i=1}{\overset{n}{%
{\displaystyle\sum}
}}\left\vert \mu_{i}\right\vert ^{s_{j}}=1\right\}  \text{.}\label{jmaa4}%
\end{align}
It is plain that (\ref{jmaa3}) holds whenever $%
{\textstyle\sum_{i=1}^{n}}
\left\vert \mu_{i}\right\vert ^{s_{j}}=1$. Thus, by (\ref{jmaa3}) and
(\ref{jmaa4}), it follows that%
\[
\left(  \underset{i=1}{\overset{n}{%
{\displaystyle\sum}
}}\left\vert \left(  a_{i}^{(1)}\right)  ^{n_{1}}\cdots\left(  a_{i}%
^{(m)}\right)  ^{n_{m}}\right\vert ^{\frac{s_{j}}{s_{j}-1}q}\right)
^{1/\left(  \frac{s_{j}}{s_{j}-1}\right)  }\leq\left(  C_{1}^{-1}KC_{2}%
A_{j}\overset{m}{\underset{k=1}{%
{\displaystyle\prod}
}}\left(  2\varrho_{k}\left\Vert x_{k}\right\Vert \right)  ^{n_{k}}\right)
^{q}%
\]
and then%
\[
\left(  \underset{i=1}{\overset{n}{%
{\displaystyle\sum}
}}\left\vert \left(  a_{i}^{(1)}\right)  ^{n_{1}}\cdots\left(  a_{i}%
^{(m)}\right)  ^{n_{m}}\right\vert ^{\frac{s_{j}}{s_{j}-1}q}\right)
^{1/\left(  \frac{s_{j}}{s_{j}-1}\right)  q}\leq C_{1}^{-1}KC_{2}A_{j}%
\overset{m}{\underset{k=1}{%
{\displaystyle\prod}
}}\left(  2\varrho_{k}\right)  ^{n_{k}}\overset{m}{\underset{k=1}{%
{\displaystyle\prod}
}}\left\Vert x_{k}\right\Vert ^{n_{k}}\text{.}%
\]
Since $\frac{s_{j}}{s_{j}-1}q=\frac{pq}{\left(  j+1\right)  p-\left(
j+1\right)  q}$ and $n$ is arbitrary, we have $\eta\leq\frac{pq}{\left(
j+1\right)  p-\left(  j+1\right)  q}$ and%
\[
\left(  \underset{i=1}{\overset{\infty}{%
{\displaystyle\sum}
}}\left\vert \left(  a_{i}^{(1)}\right)  ^{n_{1}}\cdots\left(  a_{i}%
^{(m)}\right)  ^{n_{m}}\right\vert ^{\frac{pq}{\left(  j+1\right)  p-\left(
j+1\right)  q}}\right)  ^{1/\left(  \frac{pq}{\left(  j+1\right)  p-\left(
j+1\right)  q}\right)  }\leq A_{j+1}\overset{m}{\underset{k=1}{%
{\displaystyle\prod}
}}\left\Vert x_{k}\right\Vert ^{n_{k}}\text{,}%
\]
which proves (i) for $j+1$. To prove (ii), assume $q\leq\frac{\left(
j+1\right)  p}{j+2}$ and invoke, for a fixed $n$, $S$ again. We have
$\frac{pq}{\left(  j+1\right)  p-\left(  j+1\right)  q}\leq p$, so%
\begin{align*}
\left\Vert S\left(  x_{1},\ldots,x_{m}\right)  \right\Vert  &  =\left\Vert
\underset{i=1}{\overset{n}{%
{\displaystyle\sum}
}}\left(  a_{i}^{(1)}\right)  ^{n_{1}}\cdots\left(  a_{i}^{(m)}\right)
^{n_{m}}y_{i}\right\Vert \\
&  \leq C_{2}\left(  \underset{i=1}{\overset{n}{%
{\displaystyle\sum}
}}\left\vert \left(  a_{i}^{(1)}\right)  ^{n_{1}}\cdots\left(  a_{i}%
^{(m)}\right)  ^{n_{m}}\right\vert ^{p}\right)  ^{\frac{1}{p}}\\
&  \leq C_{2}\left(  \underset{i=1}{\overset{n}{%
{\displaystyle\sum}
}}\left\vert \left(  a_{i}^{(1)}\right)  ^{n_{1}}\cdots\left(  a_{i}%
^{(m)}\right)  ^{n_{m}}\right\vert ^{\frac{pq}{\left(  j+1\right)  p-\left(
j+1\right)  q}}\right)  ^{1/\left(  \frac{pq}{\left(  j+1\right)  p-\left(
j+1\right)  q}\right)  }\\
&  \leq C_{2}A_{j+1}\overset{m}{\underset{k=1}{%
{\displaystyle\prod}
}}\left\Vert x_{k}\right\Vert ^{n_{k}}\text{.}%
\end{align*}
Thus $\left\Vert S\right\Vert \leq C_{2}A_{j+1}$ and $\pi_{\text{\textit{as}%
}\left(  q;1\right)  }^{\left(  n_{1},\ldots,n_{m}\right)  }\left(  S\right)
\leq KC_{2}A_{j+1}$ and then we get%
\begin{align*}
\underset{i=1}{\overset{n}{%
{\displaystyle\sum}
}}\left\vert \left(  a_{i}^{(1)}\right)  ^{n_{1}}\cdots\left(  a_{i}%
^{(m)}\right)  ^{n_{m}}C_{1}\right\vert ^{q} &  \leq\underset{i=1}{\overset
{n}{%
{\displaystyle\sum}
}}\left\Vert \left(  a_{i}^{(1)}\right)  ^{n_{1}}\cdots\left(  a_{i}%
^{(m)}\right)  ^{n_{m}}y_{i}\right\Vert ^{q}\\
&  =\underset{i=1}{\overset{n}{%
{\displaystyle\sum}
}}\left\Vert S\left(  a_{i}^{(1)}x_{i}^{\left(  1\right)  },\ldots,a_{i}%
^{(m)}x_{i}^{\left(  m\right)  }\right)  \right\Vert ^{q}\\
&  \leq\left(  \pi_{\text{\textit{as}}\left(  q;1\right)  }^{\left(
n_{1},\ldots,n_{m}\right)  }\left(  S\right)  \underset{k=1}{%
{\displaystyle\prod}
}2^{n_{k}}\underset{\varepsilon_{i}\in\left\{  1,-1\right\}  }{\max}\left\{
\left\Vert \overset{n}{\underset{i=1}{%
{\textstyle\sum}
}}\varepsilon_{i}a_{i}^{(k)}x_{i}^{\left(  k\right)  }\right\Vert \right\}
^{n_{k}}\right)  ^{q}\\
&  \leq\left(  KC_{2}A_{j+1}\right)  ^{q}\overset{m}{\underset{k=1}{%
{\displaystyle\prod}
}}\left(  2\varrho_{k}\left\Vert x_{k}\right\Vert \right)  ^{n_{k}q}\text{.}%
\end{align*}
Consequently, since $n$ is arbitrary, we have%
\[
\left(  \underset{i=1}{\overset{\infty}{%
{\displaystyle\sum}
}}\left\vert \left(  a_{i}^{(1)}\right)  ^{n_{1}}\cdots\left(  a_{i}%
^{(m)}\right)  ^{n_{m}}\right\vert ^{q}\right)  ^{\frac{1}{q}}\leq
B_{j+1}\overset{m}{\underset{k=1}{%
{\displaystyle\prod}
}}\left\Vert x_{k}\right\Vert ^{n_{k}}%
\]
whenever $x_{k}=%
{\textstyle\sum_{i=1}^{\infty}}
a_{i}^{(k)}x_{i}^{\left(  k\right)  }\in X_{k}$, $k=1,\ldots,m$, proving (ii)
for $j+1$. The induction argument is done.

Finally, since $\lim_{j\rightarrow\infty}\frac{jp}{j+1}=p$, the proof is concluded.
\end{proof}

\begin{theorem}
\label{thm23multipol}Let $X_{1},\ldots,X_{m}$ be infinite dimensional Banach
spaces with normalized unconditional Schauder basis and $\mathcal{P}%
_{\text{\textit{as}}\left(  q;1\right)  }\left(  ^{n_{1}}X_{1},\ldots,^{n_{m}%
}X_{m};Y\right)  =\mathcal{P}\left(  ^{n_{1}}X_{1},\ldots,^{n_{m}}%
X_{m};Y\right)  $. Then $\eta\leq q$ if:

\begin{description}
\item[(i)] $q<1$ and $\dim Y<\infty$;

\item[(ii)] $q<\cot Y$ and $\dim Y=\infty$.
\end{description}

\begin{proof}
(i) It is immediate that we just need to consider $Y=\mathbb{K}$. Applying
Lemma \ref{jmaamultipol} with $p=C_{1}=C_{2}=y_{1}=\cdots=y_{n}=1$, the proof
is done.

(ii) Since Maurey--Pisier Theorem (see \cite[p. 226]{diestel}) asserts that
$Y$ finitely factors $\ell_{\cot Y}\hookrightarrow\ell_{\infty}$, it suffices
to call on Lemma \ref{jmaamultipol} with $p=\cot Y$.
\end{proof}
\end{theorem}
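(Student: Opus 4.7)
The plan is to reduce each case to a direct application of Lemma \ref{jmaamultipol} by verifying that $Y$ satisfies the required $\ell_p$-factoring condition for an appropriate $p$ strictly larger than $q$. Since the lemma's conclusion is exactly $\eta \leq q$ under the hypothesis $q<p$, the whole task collapses to producing vectors $y_1,\ldots,y_n \in Y$ with $\|y_j\|$ bounded below by some $C_1>0$ and satisfying $\|\sum a_j y_j\| \leq C_2 \|a\|_p$.

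For part (i), I first reduce to the scalar case $Y = \mathbb{K}$. Since $\dim Y < \infty$, one can compose with any norm-one linear functional on $Y$ (or pick coordinates in some fixed basis); the coincidence hypothesis is inherited by $\mathbb{K}$-valued multipolynomials, and $\eta$ depends only on $X_1,\ldots,X_m$ and the indices $n_1,\ldots,n_m$, not on $Y$. Once $Y = \mathbb{K}$, the hypothesis of Lemma \ref{jmaamultipol} holds trivially with $p = 1$, $C_1 = C_2 = 1$, and $y_1 = \cdots = y_n = 1$: the inequality $|\sum a_j| \leq \sum |a_j|$ is just the triangle inequality. Since $q < 1 = p$, Lemma \ref{jmaamultipol} delivers $\eta \leq q$.

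For part (ii), the key input is the Maurey--Pisier theorem (as cited on p.~226 of \cite{diestel}), which asserts that every infinite-dimensional Banach space $Y$ finitely factors the formal inclusion $\ell_{\cot Y} \hookrightarrow \ell_\infty$. Unpacking the definition given in the preliminaries, for any fixed $0 < \delta < 1$ and every $n \in \mathbb{N}$ there exist $y_1,\ldots,y_n \in Y$ with $1 - \delta \leq \|y_j\| \leq 1$ and $\|\sum a_j y_j\| \leq \|a\|_{\cot Y}$. This is precisely the hypothesis of Lemma \ref{jmaamultipol} with $p = \cot Y$, $C_1 = 1 - \delta$, $C_2 = 1$. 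Since $q < \cot Y = p$ by assumption, Lemma \ref{jmaamultipol} again yields $\eta \leq q$.

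No serious obstacle is expected: once the heavy lifting of Lemma \ref{jmaamultipol} is in hand, the only subtleties are the scalar-valued reduction in (i), which is standard, and the correct invocation of Maurey--Pisier in (ii). The theorem is essentially a packaging of Lemma \ref{jmaamultipol} into the two structural regimes on $Y$ (finite-dimensional versus infinite-dimensional of finite cotype) in which the factoring hypothesis can be guaranteed for free.
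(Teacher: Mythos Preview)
Your proposal is correct and follows essentially the same approach as the paper: both reduce part (i) to $Y=\mathbb{K}$ and invoke Lemma~\ref{jmaamultipol} with $p=1$, and both handle part (ii) by quoting the Maurey--Pisier theorem to obtain the $\ell_{\cot Y}\hookrightarrow\ell_\infty$ factoring needed to apply Lemma~\ref{jmaamultipol} with $p=\cot Y$. Your version merely spells out a few of the constants and the triangle-inequality justification more explicitly.
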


\begin{corollary}
[{\cite[Theorem 2.3]{bp}}]Let $X$ be an infinite dimensional Banach spaces
with a normalized unconditional Schauder basis and $\mathcal{P}%
_{\text{\textit{as}}\left(  q;1\right)  }\left(  ^{m}X;Y\right)
=\mathcal{P}\left(  ^{m}X;Y\right)  $. Then $\eta\leq q$ if:

\begin{description}
\item[(i)] $q<1$ and $\dim Y<\infty$;

\item[(ii)] $q<\cot Y$ and $\dim Y=\infty$.
\end{description}
\end{corollary}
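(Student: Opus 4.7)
The plan is to obtain both conclusions by invoking Lemma~\ref{jmaamultipol}, which yields $\eta\leq q$ as soon as $Y$ admits, for every $n$, vectors $y_{1},\ldots,y_{n}$ satisfying the displayed finite-factorization estimate with some exponent $p>q$. So for each part, the only task is to produce such $y_{j}$'s with a suitable $p$.

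For part (i), I would first reduce to the scalar case $Y=\mathbb{K}$. Given the coincidence $\mathcal{P}_{\text{\textit{as}}(q;1)}(^{n_{1}}X_{1},\ldots,^{n_{m}}X_{m};Y)=\mathcal{P}(^{n_{1}}X_{1},\ldots,^{n_{m}}X_{m};Y)$, pick $y_{0}\in Y$ with $\|y_{0}\|=1$ and $\varphi\in Y'$ with $\varphi(y_{0})=1$. For any $Q\in\mathcal{P}(^{n_{1}}X_{1},\ldots,^{n_{m}}X_{m})$, the multipolynomial $P=Q\cdot y_{0}$ is $(q;1)$-summing by hypothesis, and $Q=\varphi\circ P$ inherits the same property; hence every scalar multipolynomial is $(q;1)$-summing. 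Now apply Lemma~\ref{jmaamultipol} to $\mathbb{K}$ with $p=1$, $C_{1}=C_{2}=1$, and $y_{1}=\cdots=y_{n}=1\in\mathbb{K}$: the required estimate $|a_{1}+\cdots+a_{n}|\leq |a_{1}|+\cdots+|a_{n}|$ is just the triangle inequality, and $q<1$ is exactly $q<p$, so the lemma delivers $\eta\leq q$.

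For part (ii), I would appeal to the Maurey--Pisier theorem (see \cite[p.~226]{diestel}), which guarantees that the infinite-dimensional space $Y$ finitely factors the formal inclusion $\ell_{\cot Y}\hookrightarrow\ell_{\infty}$. Unpacking the definition given in the paper: fix any $\delta\in(0,1)$; for each $n$ one obtains vectors $y_{1},\ldots,y_{n}\in Y$ satisfying $1-\delta\leq\|y_{j}\|\leq 1$ and $\|\sum_{j=1}^{n}a_{j}y_{j}\|\leq (\sum_{j=1}^{n}|a_{j}|^{\cot Y})^{1/\cot Y}$ for every choice of scalars. This is exactly the hypothesis of Lemma~\ref{jmaamultipol} with $p=\cot Y$, $C_{1}=1-\delta$, and $C_{2}=1$; since $q<\cot Y=p$, the lemma yields $\eta\leq q$.

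The main conceptual step is simply recognizing that the elaborate hypothesis of Lemma~\ref{jmaamultipol} is tailored to the two natural sources of factorizing vectors: scalar summation (after a one-line reduction to $\mathbb{K}$) in the finite-dimensional case, and the Maurey--Pisier finite factorizations in the infinite-dimensional case. All genuine technical work has been absorbed into the proof of Lemma~\ref{jmaamultipol}, so no further obstacle remains---the only mild subtlety is the reduction in (i), which is entirely routine.
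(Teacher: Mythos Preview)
Your proof is correct and follows essentially the same route as the paper: the paper derives the corollary by specializing Theorem~\ref{thm23multipol} to the single-variable case, and that theorem is proved exactly as you describe---reduce to $Y=\mathbb{K}$ and take $p=C_{1}=C_{2}=y_{j}=1$ for (i), invoke Maurey--Pisier with $p=\cot Y$ for (ii), and feed both into Lemma~\ref{jmaamultipol}. You have simply collapsed the two steps into one, supplying the routine reduction to the scalar case that the paper leaves as ``immediate.''
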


\begin{proof}
Apply Theorem \ref{thm23multipol} with $m=1$.
\end{proof}

\begin{corollary}
Let $X_{1},\ldots,X_{m}$ be infinite dimensional Banach spaces with normalized
unconditional Schauder basis and $\mathcal{L}_{\text{\textit{as}}\left(
q;1\right)  }\left(  X_{1},\ldots,X_{m};Y\right)  =\mathcal{L}\left(
X_{1},\ldots,X_{m};Y\right)  $. Then $\eta\leq q$ if:

\begin{description}
\item[(i)] $q<1$ and $\dim Y<\infty$;

\item[(ii)] $q<\cot Y$ and $\dim Y=\infty$.
\end{description}
\end{corollary}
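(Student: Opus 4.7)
The plan is to obtain this corollary as a direct specialization of Theorem \ref{thm23multipol} to the extreme case $n_1=\cdots=n_m=1$. The paper explicitly identifies, in the discussion following the definition of $(n_1,\ldots,n_m)$-homogeneous polynomials, that when $m>1$ and $n_1=\cdots=n_m=1$ one recovers the usual notion of continuous $m$-linear mappings; in symbols, $\mathcal{P}(^1 X_1,\ldots,^1 X_m;Y) = \mathcal{L}(X_1,\ldots,X_m;Y)$ as normed spaces, and likewise $\mathcal{P}_{\mathit{as}(q;1)}(^1 X_1,\ldots,^1 X_m;Y) = \mathcal{L}_{\mathit{as}(q;1)}(X_1,\ldots,X_m;Y)$ since the definition of absolute summability depends only on the underlying multilinear evaluation.

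Under this identification, the coincidence hypothesis of the present corollary reads exactly as the coincidence hypothesis of Theorem \ref{thm23multipol}. Moreover, the auxiliary quantity $\eta=\eta(n_1,\ldots,n_m,X_1,\ldots,X_m,\ldots)$ introduced before Problem \ref{prob} collapses, at $n_1=\cdots=n_m=1$, to the infimum of those $t$ for which $\left(\prod_{k=1}^m a_j^{(k)}\right)_{j=1}^{\infty}\in \ell_t$ whenever $x_k=\sum_j a_j^{(k)} x_j^{(k)}\in X_k$, which is precisely the invariant tied to the $m$-linear setting.

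Consequently, the plan is simply to invoke Theorem \ref{thm23multipol} with $n_1=\cdots=n_m=1$: case (i) yields $\eta\leq q$ when $q<1$ and $\dim Y<\infty$, and case (ii) yields $\eta\leq q$ when $q<\cot Y$ and $\dim Y=\infty$. There is no genuine obstacle here; the only thing to check is the translation between the multipolynomial and multilinear languages, and this translation is built into the paper's conventions (and was used verbatim to deduce the analogous multilinear corollary of Proposition \ref{coinc}). The proof therefore reduces to a one-line reference, paralleling the earlier corollaries of Proposition \ref{coinc}.

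\begin{proof}
Apply Theorem \ref{thm23multipol} with $n_1=\cdots=n_m=1$.
\end{proof}
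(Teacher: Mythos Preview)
Your proof is correct and matches the paper's own proof verbatim: both simply apply Theorem \ref{thm23multipol} with $n_{1}=\cdots=n_{m}=1$.
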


\begin{proof}
Apply Theorem \ref{thm23multipol} with $n_{1}=\cdots=n_{m}=1$.
\end{proof}

From now on it must be understood that we are considering the canonical basis.

\begin{corollary}
\label{cor22multipol}Let $m\in\mathbb{N}$ and $\left(  n_{1},\ldots
,n_{m}\right)  \in\mathbb{N}^{m}$.

\begin{description}
\item[(i)] If $1\leq r_{1},\ldots,r_{m}<\infty$ and $\dim Y=\infty$, we have%
\[
\mathcal{P}_{\text{\textit{as}}\left(  q;1\right)  }\left(  ^{n_{1}}%
\ell_{r_{1}},\ldots,^{n_{m}}\ell_{r_{m}};Y\right)  =\mathcal{P}\left(
^{n_{1}}\ell_{r_{1}},\ldots,^{n_{m}}\ell_{r_{m}};Y\right)
\]
$\Rightarrow$%
\[
q\geq\min\left\{  \frac{1}{\frac{n_{1}}{r_{1}}+\cdots+\frac{n_{m}}{r_{m}}%
},\cot Y\right\}  \text{.}%
\]

\item[(ii)] If $2\leq r_{1},\ldots,r_{m}<\infty$, $\dim Y=\infty$ and $Y$ has
cotype $\cot Y$, we have%
\[
\mathcal{P}_{\text{\textit{as}}\left(  q;1\right)  }\left(  ^{n_{1}}%
\ell_{r_{1}},\ldots,^{n_{m}}\ell_{r_{m}};Y\right)  =\mathcal{P}\left(
^{n_{1}}\ell_{r_{1}},\ldots,^{n_{m}}\ell_{r_{m}};Y\right)
\]
$\Leftrightarrow$%
\[
q\geq\min\left\{  \frac{1}{\frac{n_{1}}{r_{1}}+\cdots+\frac{n_{m}}{r_{m}}%
},\cot Y\right\}  \text{.}%
\]

\item[(iii)] For $2\leq r_{1},\ldots,r_{m}<\infty$, we have%
\[
\mathcal{P}_{\text{\textit{as}}\left(  q;1\right)  }\left(  ^{n_{1}}%
\ell_{r_{1}},\ldots,^{n_{m}}\ell_{r_{m}}\right)  =\mathcal{P}\left(  ^{n_{1}%
}\ell_{r_{1}},\ldots,^{n_{m}}\ell_{r_{m}}\right)
\]
$\Leftrightarrow$%
\[
q\geq\min\left\{  \frac{1}{\frac{n_{1}}{r_{1}}+\cdots+\frac{n_{m}}{r_{m}}%
},1\right\}  \text{.}%
\]

\end{description}
\end{corollary}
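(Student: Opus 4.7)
The first step is to identify $\eta$ for the canonical basis of $\ell_{r_1}\times\cdots\times\ell_{r_m}$. Setting $1/t_0 := n_1/r_1+\cdots+n_m/r_m$, H\"{o}lder's inequality with the exponents $r_k/(n_k t_0)$ (which sum to $1$) gives
\[
\sum_{j=1}^{\infty} \prod_{k=1}^{m} \bigl|a_j^{(k)}\bigr|^{n_k t_0} \leq \prod_{k=1}^{m} \|x_k\|^{n_k t_0}
\]
whenever $x_k = \sum_j a_j^{(k)} e_j \in \ell_{r_k}$, so $\eta \leq t_0$. Sharpness is obtained by testing on the coefficients $a_j^{(k)} = j^{-1/r_k}(\log j)^{-\alpha}$ with $\alpha > \max_k 1/r_k$: these belong to $\ell_{r_k}$ for every $k$, yet the product $\prod_k (a_j^{(k)})^{n_k}$ decays only like $j^{-1/t_0}$ times a logarithmic factor, so it escapes $\ell_{t'}$ for every $t' < t_0$. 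Therefore $\eta = 1/(n_1/r_1+\cdots+n_m/r_m)$.

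With $\eta$ pinned down, (i) is a direct consequence of Theorem \ref{thm23multipol}(ii): assuming coincidence, if $q < \cot Y$ then $\eta \leq q$, which is exactly the bound $q \geq 1/\sum_k n_k/r_k$; the case $q \geq \cot Y$ is trivially covered by the minimum. For (ii), the forward implication is (i). For the converse, split into two subcases: if $q \geq \cot Y$ invoke Proposition \ref{coinc}(ii); if $q \geq 1/\sum_k n_k/r_k$, use that $\ell_{r_k}$ has cotype exactly $r_k$ for $r_k \geq 2$, so Proposition \ref{coinc}(i) with $q_k = r_k$ and $s = q$ (the hypothesis $1/s \leq \sum n_k/q_k$ being precisely $q \geq 1/\sum n_k/r_k$) yields the required coincidence.

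For (iii), the forward direction is analogous to (i), but now appeals to Theorem \ref{thm23multipol}(i), applicable because $\dim \mathbb{K} < \infty$, with the threshold $1$ replacing $\cot Y$. For the converse, if $q \geq 1$ then Lemma \ref{dvmultipol} together with the inclusion $\ell_1 \subset \ell_q$ gives the coincidence directly, while if $q \geq 1/\sum_k n_k/r_k$ Proposition \ref{coinc}(i) applies exactly as in (ii). The main obstacle is the sharpness half of the computation of $\eta$; once that is secured, every remaining step is a routine recycling of the machinery developed earlier in the paper (Lemma \ref{dvmultipol}, Proposition \ref{coinc}, and Theorem \ref{thm23multipol}).
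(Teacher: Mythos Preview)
Your proof is correct and follows the same route as the paper's: compute $\eta = 1/\sum_k n_k/r_k$ for the canonical bases, then invoke Theorem \ref{thm23multipol} for the forward implications and Proposition \ref{coinc} (together with Lemma \ref{dvmultipol} for the scalar case) for the converses. The only difference is that you supply an explicit sharpness example for the lower bound $\eta \geq t_0$, whereas the paper simply asserts $\eta = t_0$ ``by H\"older's Inequality''; your version is the more complete of the two.
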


\begin{proof}
(i) If $q<\min\left\{  1/\left(  \frac{n_{1}}{r_{1}}+\cdots+\frac{n_{m}}%
{r_{m}}\right)  ,\cot Y\right\}  $, then Theorem \ref{thm23multipol} and
H\"{o}lder's Inequality provide $1/\left(  \frac{n_{1}}{r_{1}}+\cdots
+\frac{n_{m}}{r_{m}}\right)  =\eta\leq q$ (contradition).

(ii) Suppose that $q\geq\min\left\{  1/\left(  \frac{n_{1}}{r_{1}}%
+\cdots+\frac{n_{m}}{r_{m}}\right)  ,\cot Y\right\}  $. If $q\geq1/\left(
\frac{n_{1}}{r_{1}}+\cdots+\frac{n_{m}}{r_{m}}\right)  $, the result follows
from Proposition \ref{coinc}(i). If $q\geq\cot Y$, then it follows from
Proposition \ref{coinc}(ii). The converse follows from (i).

(iii) Assume the coincidence hypothesis and suppose that
\[
q<\min\left\{  1/\left(  \frac{n_{1}}{r_{1}}+\cdots+\frac{n_{m}}{r_{m}%
}\right)  ,1\right\}  \text{,}%
\]
then $\eta=1/\left(  \frac{n_{1}}{r_{1}}+\cdots+\frac{n_{m}}{r_{m}}\right)
>q$,\textbf{ }which contradicts Theorem \ref{thm23multipol}(i). Reciprocally,
if $q\geq1/\left(  \frac{n_{1}}{r_{1}}+\cdots+\frac{n_{m}}{r_{m}}\right)  $
apply Proposition \ref{coinc}(i) and we are done. If $q\geq1$, since $\ell
_{q}\supseteq\ell_{1}$, the proof is now a consequence of Lemma
\ref{dvmultipol}.
\end{proof}

We recover the original Botelho--Pellegrino's polynomial version.

\begin{corollary}
[{\cite[Corollary 2.2]{bp}}]Let $m\in\mathbb{N}$.

\begin{description}
\item[(i)] If $r\geq1$, $\dim Y=\infty$ and $Y$ has cotype $\cot Y$, we have%
\[
\mathcal{P}_{\text{\textit{as}}\left(  q;1\right)  }\left(  ^{m}\ell
_{r};Y\right)  =\mathcal{P}\left(  ^{m}\ell_{r};Y\right)  \Rightarrow
q\geq\min\left\{  \frac{r}{m},\cot Y\right\}  \text{.}%
\]

\item[(ii)] If $r\geq2$, $\dim Y=\infty$ and $Y$ has cotype $\cot Y$, we have%
\[
\mathcal{P}_{\text{\textit{as}}\left(  q;1\right)  }\left(  ^{m}\ell
_{r};Y\right)  =\mathcal{P}\left(  ^{m}\ell_{r};Y\right)  \Leftrightarrow
q\geq\min\left\{  \frac{r}{m},\cot Y\right\}  \text{.}%
\]

\item[(iii)] For $r\geq2$, we have $\mathcal{P}_{\text{\textit{as}}\left(
q;1\right)  }\left(  ^{m}\ell_{r}\right)  =\mathcal{P}\left(  ^{m}\ell
_{r}\right)  \Leftrightarrow q\geq\min\left\{  \frac{r}{m},1\right\}  $.
\end{description}
\end{corollary}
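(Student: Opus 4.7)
The plan is to obtain this corollary as an immediate consequence of Corollary \ref{cor22multipol} applied to the degenerate multipolynomial setting consisting of a single space. The only care required is purely notational: in the statement above, the letter $m$ denotes the \emph{degree} of the polynomial, whereas in Corollary \ref{cor22multipol} the letter $m$ plays the role of the \emph{number of variables}.

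To avoid the clash, in the notation of Corollary \ref{cor22multipol} I would take the number of variables equal to $1$ and the single degree $n_{1}$ equal to $m$ (the degree appearing in the corollary to be proved), together with $r_{1}=r$. Under this substitution the spaces $\mathcal{P}(^{n_{1}}\ell_{r_{1}},\ldots,^{n_{m}}\ell_{r_{m}};Y)$ and $\mathcal{P}_{\text{\textit{as}}(q;1)}(^{n_{1}}\ell_{r_{1}},\ldots,^{n_{m}}\ell_{r_{m}};Y)$ collapse, respectively, to $\mathcal{P}(^{m}\ell_{r};Y)$ and $\mathcal{P}_{\text{\textit{as}}(q;1)}(^{m}\ell_{r};Y)$, while the arithmetic quantity $1/\bigl(\tfrac{n_{1}}{r_{1}}+\cdots+\tfrac{n_{m}}{r_{m}}\bigr)$ reduces to $r/m$. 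This directly matches the threshold $\min\{r/m,\cot Y\}$ (respectively $\min\{r/m,1\}$ in the scalar case) appearing in the present statement.

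With these identifications, parts (i), (ii), (iii) here become literally parts (i), (ii), (iii) of Corollary \ref{cor22multipol}. The cotype restriction on $\ell_{r}$ (which motivates the hypothesis $r\geq 2$ in (ii) and (iii)) and the Defant--Voigt-type input supplied by Lemma \ref{dvmultipol} that handles the scalar-valued case (iii) are already absorbed into Corollary \ref{cor22multipol} and need not be reinvoked. The only conceivable obstacle is bookkeeping the clashing use of $m$; once this is handled, the proof is a one-line invocation of Corollary \ref{cor22multipol} with the specialization described above.
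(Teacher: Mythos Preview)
Your proposal is correct and follows exactly the paper's approach: the paper's proof is the single line ``Apply Corollary \ref{cor22multipol} with $m=1$,'' which is precisely your specialization (number of variables equal to $1$, $n_{1}=m$, $r_{1}=r$). Your careful handling of the clash in the meaning of $m$ is accurate and matches what the paper leaves implicit.
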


\begin{proof}
Apply Corollary \ref{cor22multipol} with $m=1$.
\end{proof}

Likewise, we naturally extract the multilinear version.

\begin{corollary}
Let $m\in\mathbb{N}$.

\begin{description}
\item[(i)] If $1\leq r_{1},\ldots,r_{m}<\infty$, $\dim Y=\infty$, we have%
\[
\mathcal{L}_{\text{\textit{as}}\left(  q;1\right)  }\left(  \ell_{r_{1}%
},\ldots,\ell_{r_{m}};Y\right)  =\mathcal{L}\left(  \ell_{r_{1}},\ldots
,\ell_{r_{m}};Y\right)
\]
$\Rightarrow$%
\[
q\geq\min\left\{  \frac{1}{\frac{1}{r_{1}}+\cdots+\frac{1}{r_{m}}},\cot
Y\right\}  \text{.}%
\]

\item[(ii)] If $2\leq r_{1},\ldots,r_{m}<\infty$, $\dim Y=\infty$ and $Y$ has
cotype $\cot Y$, we have%
\[
\mathcal{L}_{\text{\textit{as}}\left(  q;1\right)  }\left(  \ell_{r_{1}%
},\ldots,\ell_{r_{m}};Y\right)  =\mathcal{L}\left(  \ell_{r_{1}},\ldots
,\ell_{r_{m}};Y\right)
\]
$\Leftrightarrow$%
\[
q\geq\min\left\{  \frac{1}{\frac{1}{r_{1}}+\cdots+\frac{1}{r_{m}}},\cot
Y\right\}  \text{.}%
\]

\item[(iii)] For $2\leq r_{1},\ldots,r_{m}<\infty$, we have%
\[
\mathcal{L}_{\text{\textit{as}}\left(  q;1\right)  }\left(  \ell_{r_{1}%
},\ldots,\ell_{r_{m}}\right)  =\mathcal{L}\left(  \ell_{r_{1}},\ldots
,\ell_{r_{m}}\right)
\]
$\Leftrightarrow$%
\[
q\geq\min\left\{  \frac{1}{\frac{1}{r_{1}}+\cdots+\frac{1}{r_{m}}},1\right\}
\text{.}%
\]

\end{description}
\end{corollary}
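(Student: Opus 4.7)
The plan is to apply Corollary \ref{cor22multipol} to the extreme case $n_{1}=\cdots=n_{m}=1$. Recall from the discussion just after the definition of $\left(n_{1},\ldots,n_{m}\right)$-homogeneous polynomials that setting $n_{1}=\cdots=n_{m}=1$ recovers precisely the space of continuous $m$-linear mappings. Hence $\mathcal{P}\left(^{1}\ell_{r_{1}},\ldots,^{1}\ell_{r_{m}};Y\right)=\mathcal{L}\left(\ell_{r_{1}},\ldots,\ell_{r_{m}};Y\right)$ and likewise $\mathcal{P}_{\text{\textit{as}}\left(q;1\right)}\left(^{1}\ell_{r_{1}},\ldots,^{1}\ell_{r_{m}};Y\right)=\mathcal{L}_{\text{\textit{as}}\left(q;1\right)}\left(\ell_{r_{1}},\ldots,\ell_{r_{m}};Y\right)$, so the coincidence hypotheses in items (i)--(iii) of the present statement are exactly the coincidence hypotheses of items (i)--(iii) of Corollary \ref{cor22multipol} with $n_{k}=1$ for all $k$.

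Under the substitution $n_{1}=\cdots=n_{m}=1$, the sum $\frac{n_{1}}{r_{1}}+\cdots+\frac{n_{m}}{r_{m}}$ appearing in Corollary \ref{cor22multipol} collapses to $\frac{1}{r_{1}}+\cdots+\frac{1}{r_{m}}$, and the conclusions then match verbatim the conclusions of the corollary to be proved. The remaining hypotheses required by Corollary \ref{cor22multipol} are also in place: each $\ell_{r_{k}}$ with $1\leq r_{k}<\infty$ is infinite dimensional and carries its canonical normalized unconditional Schauder basis (which is the basis tacitly fixed by the remark preceding Corollary \ref{cor22multipol}), and the assumptions on $\dim Y$ and $\cot Y$ in (ii) transfer unchanged.

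There is no genuine obstacle to overcome: the content is by design the extreme case $n_{1}=\cdots=n_{m}=1$ of the multipolynomial theorem, and the only work is the bookkeeping identification of the polynomial space of multidegree $\left(1,\ldots,1\right)$ with the space of multilinear mappings together with the algebraic simplification of the exponent. Accordingly, the proof reduces to the single line: apply Corollary \ref{cor22multipol} with $n_{1}=\cdots=n_{m}=1$.
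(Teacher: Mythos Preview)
Your proposal is correct and follows exactly the paper's own approach: the paper's proof is the single line ``Apply Corollary \ref{cor22multipol} with $n_{1}=\cdots=n_{m}=1$,'' and your write-up simply spells out the identifications that make this specialization legitimate.
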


\begin{proof}
Apply Corollary \ref{cor22multipol} $n_{1}=\cdots=n_{m}=1$.
\end{proof}

\bigskip

\end{document}